\documentclass[a4paper,11pt]{article}

\usepackage[margin=1in]{geometry}
\usepackage{amsmath,amssymb,amsthm,mathtools}
\usepackage{microtype}
\usepackage{indentfirst}
\usepackage{cite}
\usepackage[hidelinks,pdftitle={Induced-Count Discrepancy in Uniform Hypergraphs},pdfauthor={Chenglong Yu and Qinghou Zeng}]{hyperref}
\numberwithin{equation}{section}

\theoremstyle{plain}
\newtheorem{theorem}{Theorem}[section]
\newtheorem{lemma}[theorem]{Lemma}

\newtheorem{corollary}[theorem]{Corollary}

\theoremstyle{remark}

\newcommand{\E}{\mathbb E}
\newcommand{\Pp}{\mathbb P}
\newcommand{\Var}{\operatorname{Var}}
\newcommand{\Cov}{\operatorname{Cov}}
\newcommand{\dist}{\operatorname{dist}}
\newcommand{\cF}{\mathcal F}
\newcommand{\cB}{\mathcal B}
\newcommand{\cI}{\mathcal I}
\newcommand{\HH}{\mathbb H}
\newcommand{\Er}{E_r}
\newcommand{\inner}[2]{\left\langle #1,#2\right\rangle}
\newcommand{\doi}[1]{\href{https://doi.org/#1}{doi:\nolinkurl{#1}}}
\newcommand{\arxiv}[1]{\href{https://arxiv.org/abs/#1}{arXiv:\nolinkurl{#1}}}

\title{Induced-Count Discrepancy in Uniform Hypergraphs}
\author{Chenglong Yu\footnote{Center for Discrete Mathematics, Fuzhou University, Fujian 350003, China. Email: yuchenglong2002@163.com.}}
\date{}

\begin{document}
	\maketitle
	
	\begin{abstract}
		Let $u_k^{(r)}(H,p)$ be the maximum, over all $k$-vertex $r$-uniform hypergraphs $F$, of the absolute difference between the number of induced copies of $F$ in $H$ and its expectation in the binomial random $r$-graph with the same number of vertices as $H$ and edge probability $p$. Let $u_k^{(r)}(n,p)$ be the minimum of $u_k^{(r)}(H,p)$ over all $n$-vertex $r$-uniform hypergraphs, and let $D_r(n,p)$ be the distance from $p\binom nr$ to the nearest integer. For fixed $r\ge3$, $k\ge2r$ and $\varepsilon>0$, the quantity $u_k^{(r)}(n,p)$ has order
		$\max\!\left\{D_r(n,p),p(1-p)\right\}n^{k-r}$
		whenever $p(1-p)n^{\lfloor r/2\rfloor/(\binom{2r-2}{r}-1)}\ge\varepsilon$. Using local moment estimates and switchings in a sparse reservoir, we construct one hypergraph with the nearest-integer edge count that controls all induced orders simultaneously.
	\end{abstract}

	\smallskip
	\noindent\textbf{Keywords:} induced subhypergraph counts; proportional hypergraphs; quasirandomness; switching.

	\section{Introduction}
	
	How closely can a deterministic hypergraph reproduce the induced subhypergraph counts of a binomial random hypergraph? Exact agreement is constrained by integrality: the number of edges is an integer, whereas its random-model expectation need not be. At larger orders there is a further obstruction arising from the variance of the random edge count. We study the minimum worst-case deviation compatible with these two obstructions.
	
	An $r$-graph is a simple $r$-uniform hypergraph. Write $v(H)=|V(H)|$, $e(H)=|E(H)|$, and $[n]=\{1,\ldots,n\}$. Let $\HH_{n,p}^{(r)}$ be the random $r$-graph on $[n]$ in which each $r$-set is an edge independently with probability $p$. For $r$-graphs $F$ and $H$, let $N(F,H)$ count the number of induced subhypergraphs of $H$ that are isomorphic to $F$. Thus, if $v(F)=k$, then
	\[
	\E N(F,\HH_{n,p}^{(r)})
	=\frac{(n)_k}{|\operatorname{Aut}(F)|}p^{e(F)}(1-p)^{\binom{k}{r}-e(F)},
	\]
	where $(n)_k=n(n-1)\cdots(n-k+1)$. We consider
	\[
	\begin{aligned}
		u_k^{(r)}(H,p)&=\max_{v(F)=k}\bigl|N(F,H)-\E N(F,\HH_{n,p}^{(r)})\bigr|,\\
		u_k^{(r)}(n,p)&=\min_{v(H)=n}u_k^{(r)}(H,p).
	\end{aligned}
	\]
	The maximum ranges over isomorphism types of $k$-vertex $r$-graphs. We also set $D_r(n,p)=\dist(p\binom nr,\mathbb Z)$. At order $r$ the edge and nonedge discrepancies have the same absolute value, so $u_r^{(r)}(n,p)=D_r(n,p)$. Throughout, the density $p=p(n)$ may vary with $n$.
	
	The proportional-graph problem originated with Janson and Kratochv\'il~\cite{JK1991}. Probabilistic constructions, existence results and a graph Fourier transform were developed in~\cite{Janson1995,JS1992,Karrman1993}. Naves, Pikhurko and Scott~\cite{NPS2018} determined the following quantitative order for ordinary graphs.
	
	\begin{theorem}[Naves, Pikhurko and Scott~\cite{NPS2018}]\label{thm:graph-case}
		Fix $k\ge4$. If $p=p(n)$ satisfies $0<p<1$ and $p(1-p)n^{1/2}\to\infty$, then
		\[
		u_k^{(2)}(n,p)=\Theta_k\!\left(\max\{D_2(n,p),p(1-p)\}n^{k-2}\right).
		\]
	\end{theorem}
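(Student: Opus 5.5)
We are asked to sketch a proof of the Naves–Pikhurko–Scott theorem, Theorem~\ref{thm:graph-case}: for fixed $k\ge4$ and $p(1-p)n^{1/2}\to\infty$,
\[
u_k^{(2)}(n,p)=\Theta_k\bigl(\max\{D_2(n,p),p(1-p)\}n^{k-2}\bigr).
\]
The plan is to prove the lower bound and the upper bound separately. The lower bound rests on two obstructions: integrality of the edge count and the variance of the edge count. The upper bound is a construction.

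\emph{Lower bound.} First we pass from order $k$ down to order $2$ by averaging. For any graph $H$ on $n$ vertices, count pairs ($k$-set $S$, edge inside $S$). This gives the identity
\[
\sum_{v(F)=k} e(F)\,N(F,H)=\binom{n-2}{k-2}e(H).
\]
The same identity holds in expectation for $\HH^{(2)}_{n,p}$. Subtracting the two, and noting that the number of isomorphism types is $O_k(1)$, we get
\[
u_k(H,p)\cdot O_k(1)\ge \binom{n-2}{k-2}\,\bigl|e(H)-p\tbinom n2\bigr|\ge \binom{n-2}{k-2}D_2(n,p).
\]
This yields the $D_2$ term.

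For the $p(1-p)$ term we use a second-moment identity. Consider the number of pairs of edges $P_2(H)=\binom{e(H)}{2}$. Expanding $e(H)^2$ over $4$-subsets and smaller subsets expresses $e(H)^2$ as a fixed linear combination of induced counts of $2$-, $3$- and $4$-vertex graphs. Summing up from $4$-vertex sets to $k$-vertex sets then expresses it as a linear combination of the $N(F,H)$ with $v(F)=k$, with coefficients $O(n^{4-k})$.

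Now compare with the random model. There $\E\binom{e}{2}-\binom{\E e}{2}$ equals
\[
\tfrac12\Var e-\tfrac12\E e+\dots=-\tfrac12 p^2\binom n2+O(\cdot),
\]
which is a deficit of order $p^2n^2$. More precisely, $\E e^2-(\E e)^2=p(1-p)\binom n2$.

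Suppose every $N(F,H)$ were within $\delta n^{k-2}$ of its expectation with $\delta$ small. Then $e(H)$ would be within $\delta n^2$ of $p\binom n2$. We also get $e(H)^2=\E e^2+O(\delta n^{2})$ and $e(H)=\E e+O(\delta)$. The error here uses the coefficient scale $n^{4-k}$ times $\delta n^{k-2}$, giving $\delta n^2$.

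Then $e(H)^2-(\E e)^2=\Var e\pm O(\delta n^2)$. The left side is at most $O(\delta n^2)$ in size, once we bound $|e-\E e|\le O(\delta)$ after rescaling and check the degree bookkeeping. This forces $p(1-p)n^2\lesssim\delta n^2$. The main care point is tracking which combination isolates $\Var e$ exactly. I would use the order-$4$ identity $\binom{n-4}{k-4}\binom{e}{2}$-type averaging.

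\emph{Upper bound.} We start from $G\sim\HH^{(2)}_{n,p}$ and compute the Fourier/moment expansion of $N(F,G)-\E N(F,\cdot)$. Its fluctuations come from the edge count, the degree sequence, and higher terms. Using the condition $p(1-p)n^{1/2}\to\infty$, the terms beyond degrees contribute at most $O(p(1-p)n^{k-2})$ with positive probability.

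We then fix $G$ so that $e(G)$ equals the nearest integer to $p\binom n2$ and the degree deviations $d_v-p(n-1)$ are all $O(1)$ or suitably small in $\ell_2$. This is done by switchings: add or delete edges at vertices whose degree is too high or too low, pairing them up. Each switching changes every induced count by $O(n^{k-3})$ in a controlled way. We need $O(\sqrt{p(1-p)}\,n^{3/2})$ edge modifications, which must be spread so that degree fluctuations vanish.

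\emph{Main obstacle.} The obstacle is showing that the switchings do not recreate higher-order fluctuations. I would handle this by choosing the modifications from a sparse random reservoir, so that the perturbation is itself quasirandom. Its contribution to the higher Fourier levels is then bounded by local moment estimates. This is also where the threshold $n^{1/2}$ enters.
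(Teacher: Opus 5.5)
Your lower bound is essentially sound and is the $r=2$ case of Section~\ref{sec:lower}. Edge-count averaging gives the $D_2$ term, and writing $e(H)^2$ through induced counts on at most four vertices, lifted to order $k$, gives the $p(1-p)$ term. Your aside $\E\binom e2-\binom{\E e}2=-\tfrac12p^2\binom n2$ is wrong (the difference is $\tfrac12\Var e$), but you do not use it.

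The upper bound has two genuine gaps. \emph{First, the degree target is wrong, and in fact fatal.} Put $\epsilon_v=d_v-p(n-1)$. In $\HH^{(2)}_{n,p}$ one has $\E\sum_v\epsilon_v^2=n(n-1)p(1-p)$. The cherry count $\sum_v\binom{d_v}2=N(P_3,H)+3N(K_3,H)$ is a fixed combination of $3$-vertex induced counts. Suppose $e(H)$ is a nearest integer to $p\binom n2$ and $\sum_v\epsilon_v^2=o(p(1-p)n^2)$, in particular if all $\epsilon_v=O(1)$. Then $\sum_v\binom{d_v}2$ falls short of its expectation by $(\tfrac12-o(1))p(1-p)n^2$. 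Hence $u_3^{(2)}(H,p)\gtrsim p(1-p)n^2$, and Lemma~\ref{lem:order-lifting} gives $u_k^{(2)}(H,p)\gtrsim p(1-p)n^{k-1}$. This exceeds the target $\max\{D_2(n,p),p(1-p)\}n^{k-2}\le n^{k-2}$ by a factor of order at least $p(1-p)n\to\infty$. Nearly regular degrees thus run into the same variance obstruction, applied to the degrees. What must be corrected is only the scalar statistic $S(P_3,H)=\tfrac12\sum_v\bigl(\epsilon_v^2-\sum_{a\ne v}X_{\{v,a\}}^2\bigr)$, to within $O(pn)$, while the degrees keep their random spread. This is also why your budget of order $\sqrt p\,n^{3/2}$ toggles is far above the $O(\sqrt p\,n)$ needed to move the boundedly many statistics that actually require correction. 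With $|T|^2\approx pn^3$ toggled pairs, the second-order counting of Lemma~\ref{lem:global-switching-error} no longer reaches the target $pn^2$ at four vertices.

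\emph{Second, it is false that, under $p(1-p)n^{1/2}\to\infty$, everything beyond edges and degrees is already negligible.} Take $p\le\tfrac12$ and $D_2(n,p)\le p(1-p)$. By Lemma~\ref{lem:order-lifting}, $u_k^{(2)}(H,p)=O(pn^{k-2})$ forces $u_3^{(2)}(H,p)=O(pn)$. Moreover,
\[
N(K_3,H)-\E N(K_3,\HH^{(2)}_{n,p})=S(K_3,H)+p\,S(P_3,H)+p^2(n-2)\,S(E_2,H).
\]
So even with the edge and cherry statistics corrected, one needs $S(K_3,H)=O(pn)$. In the random starting graph, however, $\E S(K_3,H)^2=\binom n3p^3(1-p)^3$. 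Thus $S(K_3,H)$ is of order $p^{3/2}n^{3/2}$, larger than $pn$ by $\sqrt{pn}\to\infty$, and nothing in a degree-fixing procedure acts on it.

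Consider the centered-switching framework of~\cite{NPS2018}, of which Section~\ref{sec:upper} is the $r\ge3$ version. There both $3$-vertex types, the cherry and the triangle, are active; these are exactly the types with $r<v(F)<2r$. They are corrected together by paired add/remove switches drawn from the joint law of the local vector $W_t$. For $t=\{a,b\}$, its coordinates are the normalized degree-sum deviation and the normalized codegree deviation $\sum_wX_{\{a,w\}}X_{\{b,w\}}$. This is where the density threshold enters. The codegree derivative has variance of order $np^2$ and normalized fourth moment $3+O(1/(np^2))$. It therefore supplies a spread-out switching direction, as in the analogue of Lemma~\ref{lem:local-moments}, only when $np^2$ is bounded below. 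The threshold is not what makes the higher Fourier levels negligible.
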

	
	They also asked about extensions to uniform hypergraphs. More recently, Sah, Sawhney and Zhu~\cite{SSZ2024} proved a joint local limit theorem for subgraph counts and applied it to existence and enumeration questions for proportional graphs.
	
	The wider study of discrepancy in graphs and hypergraphs includes the work of Erd\H{o}s and Spencer~\cite{ES1971}, Erd\H{o}s, Goldberg, Pach and Spencer~\cite{EGPS1988}, and Bollob\'as and Scott~\cite{BS2006,BS2011,BS2015}. Related questions for random graphs and hypergraphs appear in~\cite{MNS2015}. Graded and relative discrepancy are studied in~\cite{CHZ2025,CW2026,LLTY2025}, while connections with bisection, MaxCut and eigenvalues appear in~\cite{RST2026,RT2026}. Our focus here is the discrepancy of induced isomorphism counts.
	
	The integrality and variance arguments of~\cite{NPS2018} extend to arbitrary uniformity. They give the following lower bounds, with no density restriction.
	
	\begin{theorem}\label{thm:lower-main}
		Fix $r\ge2$. For every fixed $k\ge r$ there is $c_{r,k}>0$ such that, for all $n\ge k$ and $0<p<1$,
		\[
		u_k^{(r)}(n,p)\ge c_{r,k}D_r(n,p)n^{k-r}.
		\]
		If $k\ge2r$, there is also $c'_{r,k}>0$ such that, under the same conditions,
		\[
		u_k^{(r)}(n,p)\ge c'_{r,k}\max\{D_r(n,p),p(1-p)\}n^{k-r}.
		\]
	\end{theorem}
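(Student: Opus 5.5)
The plan adapts the integrality and variance arguments of Naves, Pikhurko and Scott. Both bounds come from one principle. Let $\phi$ be a function on $k$-vertex $r$-graphs with $|\phi|\le C$. Then the statistic $\Phi(H)=\sum_{S\in\binom{V(H)}{k}}\phi(H[S])=\sum_{v(F)=k}\phi(F)N(F,H)$ satisfies
\[
\bigl|\Phi(H)-\E\Phi(\HH^{(r)}_{n,p})\bigr|\le C\,T_{r,k}\,u_k^{(r)}(H,p),
\]
where $T_{r,k}$ is the number of isomorphism types of $k$-vertex $r$-graphs. So it suffices to find bounded $\phi$ for which $\Phi(H)$ is forced to differ from its random expectation. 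Throughout, put $N=\binom nr$, $m=e(H)$, $\delta=m-pN$, and $h=\mathbf 1_{E(H)}-p$ as a function on $\binom{[n]}r$.

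\emph{Integrality bound.} I take $\phi(F)=e(F)$. Double counting gives $\Phi(H)=m\binom{n-r}{k-r}$, and in the random model $\E\Phi=pN\binom{n-r}{k-r}$. Hence
\[
|\delta|\binom{n-r}{k-r}\le \binom kr T_{r,k}\,u_k^{(r)}(H,p).
\]
Since $m\in\mathbb Z$ we have $|\delta|\ge D_r(n,p)$, which yields the first inequality with $c_{r,k}\asymp n^{k-r}/\binom{n-r}{k-r}$ bounded below uniformly for $n\ge k$. The same computation also records $|\delta|\le C u_k n^{r-k}$, which I reuse below.

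\emph{Variance bound ($k\ge 2r$).} Consider the quadratic form
\[
(\textstyle\sum_e h(e))^2=\delta^2=\sum_{j=0}^{r}Q_j(H),\qquad Q_j=\sum_{|e\cap f|=j}h(e)h(f),
\]
where the sum runs over ordered pairs of $r$-sets. For each $j$, the pair $(e,f)$ spans $2r-j\le k$ vertices; this is exactly where $k\ge2r$ enters. Each such pair lies in exactly $\binom{n-2r+j}{k-2r+j}$ $k$-sets, so
\[
Q_j(H)=\binom{n-2r+j}{k-2r+j}^{-1}\sum_S q_j(H[S]),
\]
where $q_j(F)=\sum_{|e\cap f|=j,\ e,f\subseteq V(F)}h_F(e)h_F(f)$ is bounded by a constant depending on $r,k$. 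This is an exact identity valid for every $r$-graph, including the random one. By the principle above, $|Q_j(H)-\E Q_j(\HH)|\le C u_k n^{2r-j-k}\le Cu_k n^{2r-k}$. Summing over $j$ gives
\[
\bigl|\delta^2-\E(X-pN)^2\bigr|\le C u_k n^{2r-k},
\]
where $X=e(\HH^{(r)}_{n,p})$ and $\E(X-pN)^2=p(1-p)N$.

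\emph{Combining.} We obtain $p(1-p)N\le \delta^2+Cu_kn^{2r-k}\le C u_k^2n^{2r-2k}+Cu_kn^{2r-k}$. Hence either $u_k\gtrsim p(1-p)n^{k-r}$, or $u_k\gtrsim \sqrt{p(1-p)}\,n^{k-r/2}$. The second alternative implies the first, because $\sqrt{p(1-p)}\,n^{k-r/2}\ge p(1-p)n^{k-r}$. Combining with the integrality bound gives the $\max\{D_r,p(1-p)\}$ statement.

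The main point needing care is the bookkeeping in the variance step. Each $Q_j$ must really be a bounded linear functional of the induced $k$-counts with normalisation $\asymp n^{k-2r+j}$, so that the worst error, coming from $Q_0$, is only $u_kn^{2r-k}$. The constants must also be uniform in $p$, including very small $p(1-p)$ where no asymptotics in $pN$ are available. The displayed identities are exact for all $n\ge k$, so no hypothesis on $p$ is needed. For small $n$ I will absorb the ratios $\binom{n-a}{k-a}/n^{k-a}$ into the constants.
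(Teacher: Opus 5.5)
Your proof is correct. It rests on the same two obstructions as the paper: integrality of $e(H)$, and the fact that the square of the edge count is a fixed linear combination of induced counts on at most $2r$ vertices. You package them differently.

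The paper's route has three steps:
\begin{itemize}
\item It first proves a two-sided order-lifting lemma, $u_{j+1}^{(r)}(H,p)\ge c_{r,j}n\,u_j^{(r)}(H,p)$ and its rearrangement, from the identity $(n-j)N(F,H)=\sum_{F'}N(F,F')N(F',H)$.
\item It then works at order exactly $2r$ with the uncentered expansion $e(H)^2=\sum_F\alpha_FN(F,H)$, over types on $r,\ldots,2r$ vertices. It argues by contradiction with a small parameter $\tau$. The downward lifting makes $\delta$ and all lower-order errors small, so the cross term $2p\binom nr\delta$ in $e(H)^2-\E X^2$ is negligible.
\item Finally it lifts both the order-$2r$ bound and the trivial bound $u_r^{(r)}(H,p)\ge D_r(n,p)$ to every $k$ by iterating the lemma.
\end{itemize}

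You bypass the lemma by averaging each local statistic directly over the $k$-sets containing its support. This is exactly the iterated form of the lifting identity. You also center the quadratic form and split it by overlap size. The random expectation is then exactly $p(1-p)\binom nr$, with no cross term, and the error is explicitly of order $u_k^{(r)}(H,p)\,n^{2r-k}$. The conclusion follows from a direct dichotomy, using your integrality estimate $|\delta|\le Cu_kn^{r-k}$, rather than from a contradiction.

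Each route has its advantage. The paper's gives a reusable structural fact: induced-count errors at consecutive orders are comparable up to a factor $n$, which is the natural tool for transferring any low-order obstruction upward. Yours is shorter and gives explicit constants without tuning $\tau$. It also makes visible that $k\ge2r$ is used only to fit pairs of edges with small overlap (in particular the disjoint pairs in $Q_0$) inside a single $k$-set.
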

	
	For $r\ge3$, put
	\[
	A_r=\frac{\binom{2r-2}{r}-1}{\lfloor r/2\rfloor}.
	\]
	Our main upper bound attains the same scale whenever $p(1-p)n^{1/A_r}$ is bounded below by a positive constant.
	
	\begin{theorem}\label{thm:general-upper}
		Fix $r\ge3$ and $\varepsilon>0$. There are $n_0=n_0(r,\varepsilon)$ and constants $C_{r,j,\varepsilon}>0$ for all integers $j\ge r+1$ such that, whenever $n\ge n_0$, $0<p<1$, and $p(1-p)n^{1/A_r}\ge\varepsilon$,
		there exists an $n$-vertex $r$-graph $H$ satisfying $\left|e(H)-p\binom nr\right|=D_r(n,p)$
		and, simultaneously for $r+1\le j\le n$,
		\[
		u_j^{(r)}(H,p)\le C_{r,j,\varepsilon}
		\max\{D_r(n,p),p(1-p)\}n^{j-r}.
		\]
	\end{theorem}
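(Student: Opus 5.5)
We start from $\HH=\HH_{n,p}^{(r)}$ and modify it, following the graph-case strategy of~\cite{NPS2018}. The key tool is a Fourier/Hoeffding expansion of induced counts. For a $j$-vertex $r$-graph $F$, write $N(F,H)-\E N(F,\HH^{(r)}_{n,p})$ as a sum over nonempty families $S$ of $r$-sets inside $j$-sets $J$ of $\prod_{e\in S}(\mathbf 1_{e\in H}-p)$, weighted by coefficients of size $O(1)$ times powers of $p$ and $1-p$. We group the terms by the isomorphism type of the union hypergraph spanned by $S$.

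\emph{Step 1 (reduction to local statistics).} Let $T$ be a fixed $r$-graph with $t$ edges and $v$ vertices, and let $X_T(H)=\sum_{\text{copies}}\prod(\mathbf 1-p)$. Its contribution to $u_j$ is $O(n^{j-v}|X_T(H)|)$. Normalising by $\sigma=\sqrt{p(1-p)}$, the target bound $\max\{D,p(1-p)\}n^{j-r}$ holds provided three things are true. First, $|X_{K_r}|=|e(H)-p\binom nr|=D$. Second, $|X_T|\lesssim p(1-p)n^{v-r}$ for every $T$ with at least two edges. Third, $p(1-p)$-scaled moment bounds hold. In the random hypergraph, $\Var X_T\asymp (p(1-p))^t n^v$, so the required inequality $\sqrt{(p(1-p))^tn^v}\le p(1-p)n^{v-r}$ becomes $(p(1-p))^{t-2}\le n^{v-2r}$. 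This fails only for "dense" configurations with few vertices relative to edges. For two edges it is fine whenever they overlap in fewer than $r-1$ vertices, and for those sharing $r-1$ vertices we need $v=r+1$. The threshold $p(1-p)n^{1/A_r}\ge\varepsilon$ should arise precisely from the worst such $T$. Here $A_r$ suggests a $T$ with $\binom{2r-2}{r}$ edges (the complete $r$-graph on $2r-2$ vertices) versus $\lfloor r/2\rfloor$ in the exponent. So the plan is to show that, with positive probability, $\HH$ satisfies $|X_T|\le C\, p(1-p) n^{v-r}$ for all $T$ except a bounded list of "bad" low-order statistics, namely the degree-type statistics (two edges sharing $s\le r-1$ vertices, i.e.\ codegree deviations). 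These are exactly what switchings must fix.

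\emph{Step 2 (local moment estimates).} For each fixed $T$ we bound high moments $\E X_T^{2m}$ by counting connected clusters. This gives concentration $|X_T|\le C\sqrt{\Var X_T}$ with probability close to $1$, uniformly over the finitely many $T$ relevant to order $j$. Since $j$ ranges up to $n$, we must also argue that only $T$ with $v\le 2r$ (or some bounded size) matter after the expansion. Larger $T$ then contribute at most $(p(1-p))^{t/2}n^{v/2}n^{j-v}$, which is $\le p(1-p)n^{j-r}$ once $t\ge 3$ or $v\ge 2r$ by the density hypothesis. This is where $A_r$ enters. The constants $C_{r,j,\varepsilon}$ absorb the number of types $T$ inside $j$ vertices.

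\emph{Step 3 (switchings in a sparse reservoir).} The remaining obstructions are the lower-order Hoeffding components: the total edge count and the degree and codegree profiles $d_U(H)-p\binom{n-|U|}{r-|U|}$ for $|U|\le r-1$. We reserve a sparse random set of $r$-sets (a reservoir) and perform switchings, adding and deleting reservoir edges in balanced gadgets, so that the deviation for each $U$ is reduced to $O(1)$ in the appropriate scale while $e(H)$ is moved to the nearest integer of $p\binom nr$. The switchings are chosen so that the perturbation itself contributes only $O(p(1-p)n^{v-r})$ to every $X_T$. This uses sparsity and the fact that the reservoir edges are spread out. We expect this step to be the main obstacle: codegree corrections for all $(r-1)$-sets simultaneously, with an integrality constraint and without disturbing higher statistics. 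I would first try an iterative scheme, fixing degrees of $s$-sets for $s=1,\dots,r-1$ in turn with gadgets that are combinatorial designs orthogonal to lower levels. Finally, substituting the bounds into the expansion yields $u_j^{(r)}(H,p)\le C_{r,j,\varepsilon}\max\{D_r,p(1-p)\}n^{j-r}$ for all $r+1\le j\le n$ simultaneously.
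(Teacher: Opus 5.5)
\textbf{There is a genuine gap: the reduction in Steps 1--2 is false, so the wrong statistics are corrected.}

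For a type $T$ with $v<2r$ vertices and $t$ edges to be harmless, you need $q^{t-2}\le n^{v-2r}$, where $q=p(1-p)$. This is an \emph{upper} bound on $q$. The hypothesis $qn^{1/A_r}\ge\varepsilon$ is a \emph{lower} bound, so it cannot make such types negligible.

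Concretely, the following all exceed their targets:
\begin{itemize}
\item Every two-edge type with overlap $a\ge1$, not only $a=r-1$, has standard deviation $qn^{r-a/2}$ against a target $qn^{r-a}$.
\item For $r=3$, $S(K_4^{(3)},H)$ has standard deviation $\asymp q^2n^2$ against a target $qn$.
\item Three triples on five vertices give $q^{3/2}n^{5/2}$ against a target $qn^2$.
\end{itemize}
Both ratios in the last two examples tend to infinity throughout the allowed range. So the statistics to be corrected are all types $F$ with $r<v(F)<2r$ and $n^{2r-v(F)}p^{e(F)-2}$ large. These are multi-edge polynomials, not codegree profiles.

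Step 3 also targets the wrong quantities. Let $J_b$ be the two-edge type of overlap $b$, $\mu_U$ the mean codegree, and $N=\binom{n}{r}$. One has exactly
\[
\sum_{|U|=a}\bigl(d_U(H)-\mu_U\bigr)^2=\tbinom{r}{a}\sum_eX_e^2+2\sum_{b=a}^{r-1}\tbinom{b}{a}S(J_b,H),\qquad \sum_eX_e^2=qN+(1-2p)S(E_r,H).
\]
So once the $S(J_b,H)$ are at target scale, the codegree deviations must keep their random mean square $\asymp qn^{r-a}$. Forcing all $(r-1)$-codegrees to within $O(1)$ of their means would give $|S(J_{r-1},H)|\gtrsim qn^r$, against a target of $qn$.

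\textbf{The missing idea is to correct the whole vector $R(G)=(S(F,G)/d_F)_{F\in\mathcal L}$ of active statistics at once.}
\begin{itemize}
\item The normalised derivatives $W_t=(S_t(F,H)/d_F)_{F\in\mathcal L}$ have diagonal, nondegenerate covariance. Under the density hypothesis they have bounded fourth moment.
\item That fourth-moment bound is where $A_r=g(r-2)/\lfloor r/2\rfloor$ enters, with $g(s)=\binom{r+s}{r}-1$. It comes from counting overlaps of four rooted copies of $F$ through $t$, not from some global type becoming negligible.
\item Paley--Zygmund then gives finitely many positive-probability balls $B(z_i,\delta)$ whose centres affinely span.
\item Pair an addition from one candidate class with a vertex-disjoint removal from another. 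This moves $R$ by approximately $\pm(z_i-z_0)$ at fixed edge count, and lowers $\|R\|$ by a constant each step until $\|R\|=O(1)$.
\item $O(\sqrt p\,n^{r/2})$ toggles from a sparse, degree-truncated reservoir suffice. Activity of the types keeps the $W_t$ stable under these toggles.
\item The effect on every larger statistic is controlled at first order by discarding $r$-sets whose derivative of \emph{any} type is large. At second order it is controlled by the reservoir's intersecting-pair counts.
\end{itemize}

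\textbf{A smaller gap concerns simultaneity in $j$.} One $H$ for all $j$ with $n_0=n_0(r,\varepsilon)$ needs Chebyshev bounds with summable constants over the whole countable family of types. Your concentration is only uniform over the types relevant to a single $j$.
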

	
	The hypergraph $H$ in Theorem~\ref{thm:general-upper} is independent of $j$. The constants $C_{r,j,\varepsilon}$ are finite and may depend on $j$, while the threshold $n_0$ is chosen only in terms of $r$ and $\varepsilon$. Hence, for any fixed $k\ge2r$, Theorems~\ref{thm:lower-main} and~\ref{thm:general-upper} immediately give the order of $u_k^{(r)}(n,p)$ in the stated density range.
	
	\begin{corollary}\label{cor:main-order}
		Fix $r\ge3$, $k\ge2r$ and $\varepsilon>0$. Under the density condition in Theorem~\ref{thm:general-upper},
		\[
		u_k^{(r)}(n,p)=\Theta_{r,k,\varepsilon}\!\left(
		\max\{D_r(n,p),p(1-p)\}n^{k-r}\right).
		\]
		For $r<k<2r$, the same conclusion holds whenever $D_r(n,p)\ge p(1-p)$.
	\end{corollary}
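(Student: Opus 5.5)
The corollary is a direct combination of Theorem~\ref{thm:lower-main} and Theorem~\ref{thm:general-upper}, with no new construction needed.

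\textbf{Upper bound.} Assume $n\ge n_0(r,\varepsilon)$ and $p(1-p)n^{1/A_r}\ge\varepsilon$. Theorem~\ref{thm:general-upper} supplies a single $n$-vertex $r$-graph $H$. Since $k\ge r+1$, we may apply it with $j=k$; this requires $k\le n$, which holds once $n\ge\max\{n_0,k\}$. It gives
\[
u_k^{(r)}(n,p)\le u_k^{(r)}(H,p)\le C_{r,k,\varepsilon}\max\{D_r(n,p),p(1-p)\}n^{k-r}.
\]
For the finitely many $n<\max\{n_0,k\}$ (with $n\ge k$ so that the quantity is meaningful), use the trivial bound. Each count $N(F,H)$ and its expectation lie in $[0,\binom nk]$, so $u_k^{(r)}(n,p)\le\binom nk$. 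The density hypothesis gives $p(1-p)\ge\varepsilon n^{-1/A_r}\ge\varepsilon n_0^{-1/A_r}$, so for bounded $n$ the right-hand side $\max\{D_r,p(1-p)\}n^{k-r}$ is bounded below by a positive constant depending only on $r,k,\varepsilon$. Enlarging the constant therefore covers these small $n$ as well. This is the only slightly delicate point, namely uniformity of the implicit constant over all $n$. It is routine because $n_0$ depends only on $(r,\varepsilon)$.

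\textbf{Lower bound for $k\ge 2r$.} The second part of Theorem~\ref{thm:lower-main} gives $u_k^{(r)}(n,p)\ge c'_{r,k}\max\{D_r(n,p),p(1-p)\}n^{k-r}$ for all $n\ge k$ and $0<p<1$, without any density restriction. Combined with the upper bound, this yields the $\Theta_{r,k,\varepsilon}$ statement.

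\textbf{The range $r<k<2r$.} Here only the first part of Theorem~\ref{thm:lower-main} is available, which gives $u_k^{(r)}(n,p)\ge c_{r,k}D_r(n,p)n^{k-r}$. Under the extra hypothesis $D_r(n,p)\ge p(1-p)$ we have $\max\{D_r,p(1-p)\}=D_r$, so this lower bound matches the target order. The upper bound argument above used only $k\ge r+1$, so it applies unchanged. Together these give the claimed order in this range.
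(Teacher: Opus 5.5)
Your proposal is correct and matches the paper, which obtains the corollary immediately by combining Theorem~\ref{thm:lower-main} (the second part for $k\ge2r$, and the first part together with $D_r(n,p)\ge p(1-p)$ for $r<k<2r$) with Theorem~\ref{thm:general-upper} applied at $j=k$. Your extra handling of the range $k\le n<n_0$, via the trivial bound $u_k^{(r)}(n,p)\le\binom nk$ and the lower bound $p(1-p)\ge\varepsilon n_0^{-1/A_r}$ from the density hypothesis, is a valid detail that the paper leaves implicit.
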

	
	The proof uses the centered-switching framework of Naves, Pikhurko and Scott~\cite{NPS2018}, which builds on~\cite{JS1992}. The centered expansion of induced counts and the exact second-moment identities are standard $r$-uniform extensions of the graph argument. The main new ingredients are a fourth-moment bound that yields $A_r$, useful local switching directions, a simultaneous treatment of all centered types, and a sparse reservoir that keeps switching errors small. Together, these allow one hypergraph to work for every induced order.
	Thus the lower-bound reductions, the centered expansion, and the basic switching philosophy are inherited from~\cite{NPS2018}; the overlap exponent, the uniform local moment bound, the simultaneous all-order preparation, and the multicodegree reservoir estimates are proved here in the $r$-uniform setting.
	
	Section~\ref{sec:lower} proves the lower bounds. Section~\ref{sec:upper} develops the centered statistics and local moment estimates, constructs the reservoir, and proves the upper bound.
	
	\section{Lower bounds}\label{sec:lower}
	
	The proof has two parts. A double-counting identity transfers any discrepancy at order $j$ to order $j+1$. This lifts the edge-count integrality obstruction to every larger order. A separate identity for the square of the edge count gives the variance obstruction at order $2r$, after which the same transfer applies. We give the details of the $r$-uniform versions of the arguments in~\cite{NPS2018}.
	
	\begin{lemma}\label{lem:order-lifting}
		Fix $r\ge2$ and $r\le j<n$. There are constants $c_{r,j},C_{r,j}>0$ such that every $n$-vertex $r$-graph $H$ and every $0<p<1$ satisfy
		\[
		u_{j+1}^{(r)}(H,p)\ge c_{r,j}n\,u_j^{(r)}(H,p),
		\qquad
		u_j^{(r)}(H,p)\le \frac{C_{r,j}}n u_{j+1}^{(r)}(H,p).
		\]
	\end{lemma}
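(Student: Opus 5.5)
The two inequalities in Lemma~\ref{lem:order-lifting} are equivalent (take $C_{r,j}=1/c_{r,j}$), so I only need to prove $u_j^{(r)}(H,p)\le \frac{C_{r,j}}{n}u_{j+1}^{(r)}(H,p)$. The tool is a double-counting identity relating induced counts at orders $j$ and $j+1$. This identity holds both for $H$ and, by linearity of expectation, for $\HH_{n,p}^{(r)}$.

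\emph{The identity.} Fix an $r$-graph $F$ with $v(F)=j$, and count pairs $(S,T)$ with $S\subset T\subseteq V(H)$, $|S|=j$, $|T|=j+1$ and $H[S]\cong F$. Counting by $S$ first, each of the $N(F,H)$ choices of $S$ extends in exactly $n-j$ ways. Counting by $T$ first, group the sets $T$ by the isomorphism type $G$ of $H[T]$. Each such $T$ contains exactly $N(F,G)$ admissible sets $S$. Hence
\[
(n-j)\,N(F,H)=\sum_{v(G)=j+1}N(F,G)\,N(G,H),
\]
where $G$ ranges over isomorphism types of $(j+1)$-vertex $r$-graphs. This holds for every $n$-vertex $r$-graph, so it holds pointwise for $\HH_{n,p}^{(r)}$. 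Taking expectations and subtracting gives
\[
(n-j)\bigl(N(F,H)-\E N(F,\HH_{n,p}^{(r)})\bigr)=\sum_{v(G)=j+1}N(F,G)\bigl(N(G,H)-\E N(G,\HH_{n,p}^{(r)})\bigr).
\]

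\emph{Bounding.} The number $m_{r,j+1}$ of isomorphism types $G$ is at most $2^{\binom{j+1}{r}}$. Each coefficient satisfies $N(F,G)\le j+1$, and each bracket on the right is at most $u_{j+1}^{(r)}(H,p)$ in absolute value. Therefore
\[
(n-j)\,\bigl|N(F,H)-\E N(F,\HH_{n,p}^{(r)})\bigr|\le (j+1)\,2^{\binom{j+1}{r}}\,u_{j+1}^{(r)}(H,p),
\]
and maximizing over $F$ bounds $(n-j)\,u_j^{(r)}(H,p)$ by the same quantity.

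\emph{Converting $n-j$ to order $n$.} This is the only point needing care, and it is easy. If $n\ge 2j$, then $n-j\ge n/2$. If $j<n<2j$, then $n-j\ge1> n/(2j)$. In both cases $n-j\ge n/(2j)$, which yields the claim with $C_{r,j}=2j(j+1)2^{\binom{j+1}{r}}$ and $c_{r,j}=1/C_{r,j}$.

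I do not expect any real obstacle. The only points to check are that the identity is exact (both sides count the same pairs, with no automorphism factors, since $N$ counts vertex subsets) and that it is linear in the indicator structure, so it transfers to expectations.
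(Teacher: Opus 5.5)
Your proof is correct and follows essentially the same route as the paper: the double-counting identity $(n-j)N(F,H)=\sum_{v(G)=j+1}N(F,G)N(G,H)$, transferred to the binomial model by linearity, bounded using at most $2^{\binom{j+1}{r}}$ types and $N(F,G)\le j+1$. The only difference is cosmetic: the paper uses the single bound $n-j\ge n/(j+1)$ (valid for all $n\ge j+1$), whereas you split into cases to get $n-j\ge n/(2j)$, which changes only the constant.
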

	
	\begin{proof}
		Let $F$ be an $r$-graph attaining $u_j^{(r)}(H,p)$. Count pairs $(A,x)$ with $H[A]\cong F$ and $x\in V(H)\setminus A$. Counting first by $A$, and then by the isomorphism type induced on $A\cup\{x\}$, gives
		\[
		(n-j)N(F,H)=\sum_{v(F')=j+1}N(F,F')N(F',H).
		\]
		Here $N(F,F')$ is the number of $j$-vertex subsets of $F'$ that induce a copy of $F$. The same identity holds in expectation in the binomial model. Subtracting and taking absolute values therefore yields
		\begin{equation}\label{eq:lifting}
			\begin{aligned}
				(n-j)u_j^{(r)}(H,p)
				&\le\sum_{v(F')=j+1}N(F,F')u_{j+1}^{(r)}(H,p)\\
				&\le 2^{\binom{j+1}{r}}(j+1)u_{j+1}^{(r)}(H,p).
			\end{aligned}
		\end{equation}
		Here there are at most $2^{\binom{j+1}{r}}$ types $F'$, and $N(F,F')\le j+1$. Since $n-j\ge n/(j+1)$, even in the boundary case $n=j+1$, the first displayed inequality follows. The second is the same estimate rearranged.
	\end{proof}
	
	\begin{lemma}\label{lem:variance-obstruction}
		For each fixed $r\ge2$ there is $c_r>0$ such that, for every $n\ge2r$, every $0<p<1$, and every $n$-vertex $r$-graph $H$,
		\[
		u_{2r}^{(r)}(H,p)\ge c_rp(1-p)n^r.
		\]
	\end{lemma}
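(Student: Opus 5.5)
We follow the variance argument of~\cite{NPS2018}: express $e(H)^2$ through induced counts of order $2r$, compare with the same expression in $\HH_{n,p}^{(r)}$, and use that the random model has an extra variance term $\binom nr p(1-p)$ which no deterministic $H$ reproduces exactly.

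First we write $e(H)^2$ as a linear combination of induced counts. Ordered pairs of edges $(e,f)$ are classified by $|e\cup f|=r+t$ with $0\le t\le r$. Each such pair lies in exactly $\binom{n-r-t}{r-t}$ sets $A$ of size $2r$, and every $2r$-set $A$ determines the pairs of edges inside $A$. Hence, with $P_t(G)$ the number of ordered edge pairs with union of size $r+t$,
\[
\sum_{t=0}^{r}\binom{n-r-t}{r-t}P_t(H)=\sum_{v(F)=2r}\bigl(e(F)^2\text{-type weight } w_t(F)\bigr)N(F,H),
\]
and the same identity holds in expectation, with coefficients bounded in terms of $r$. The $t=0$ term is $\binom{n-r}{r}e(H)$. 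The terms with $1\le t\le r$ also involve $P_t$ for smaller unions, so we use lower-order identities. Concretely, $e(H)^2=\sum_t P_t(H)$, and each $P_t(H)$ is itself a combination of counts $N(F',H)$ with $v(F')=r+t\le 2r$. By Lemma~\ref{lem:order-lifting}, each of these is controlled by $u_{2r}^{(r)}(H,p)$ up to a factor $O(n^{r-t})$. This gives
\[
\bigl|e(H)^2-\E[e(\HH)^2]\bigr|\le C_r\,u_{2r}^{(r)}(H,p),
\]
since $P_t$ involves order $r+t$ with lifting factor $n^{-(r-t)}$, and $P_r=e(H)$ is order $r$, carrying $n^{-r}$. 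Similarly $|e(H)-\E e(\HH)|\le C n^{-r}u_{2r}^{(r)}(H,p)$.

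Next we use the variance. Set $m=\binom nr$, $x=e(H)$ and $\mu=pm$. Then $\E[e(\HH)^2]=\mu^2+mp(1-p)$, so
\[
x^2-\mu^2-mp(1-p)=(x-\mu)(x+\mu)-mp(1-p).
\]
Suppose $u:=u_{2r}^{(r)}(H,p)\le c\,p(1-p)n^r$ with $c$ small. Then $|x-\mu|\le Cn^{-r}u\le Cc\,p(1-p)$, and $x+\mu\le 2\mu+O(1)\le 2pm+1$. The product is therefore at most $Cc\,p(1-p)(2pm+1)$. But $pm$ may be huge, so this crude bound fails. This is the main obstacle: the transfer loses the factor $\mu$. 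To fix it, we instead use the centered quantity $\sum$ over ordered pairs of distinct $r$-sets $(S,T)$ of $(1_S-p)(1_T-p)$ with the diagonal separated out. Equivalently, we compare $(x-\mu)^2$ with its expectation $mp(1-p)$. Expanding $(x-\mu)^2=x^2-2\mu x+\mu^2$, we get
\[
(x-\mu)^2-mp(1-p)=\bigl(x^2-\E e(\HH)^2\bigr)-2\mu(x-\mu).
\]
Both terms on the right are linear combinations of discrepancies, but the second still carries the factor $\mu$. So we must build the combination directly: apply the pair decomposition to $\sum_{S\ne T}(1_S-p)(1_T-p)$, which is a polynomial in the counts of $H[A]$ with coefficients depending on $p$. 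The $A$-local weight for a $2r$-set is then $g_p(F)=\sum_{S\ne T\subseteq A}(1_S-p)(1_T-p)$ suitably normalized, and $|g_p(F)|\le C_r$. This gives
\[
\Bigl|\,(x-\mu)^2-\sum_S(1_S-p)^2-\E[\cdots]\Bigr|\le C_r\,u,
\]
where the counts at order $<2r$ are lifted by Lemma~\ref{lem:order-lifting} and the coefficients are $O(1)$ because the centered weights are bounded.

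Finally, $\sum_S(1_S-p)^2=x(1-2p)+mp^2$, whose deviation from its mean $mp(1-p)$ is $(1-2p)(x-\mu)$, which is $O(n^{-r}u)$. The centered cross term has expectation $0$ in $\HH$, so
\[
(x-\mu)^2-(1-2p)(x-\mu)-mp(1-p)=O(u).
\]
If $u\le c\,p(1-p)n^r$, then $|x-\mu|\le Cc\,p(1-p)\le 1$, so the left side is at most $2-mp(1-p)$ in absolute value bounded below by $mp(1-p)-2$. When $mp(1-p)\ge 4$ this forces $u\ge c'mp(1-p)$. In the remaining case $p(1-p)\le 4/m$, we have $D_r(n,p)$-type bounds or the trivial bound $u\ge$ the order-$r$ discrepancy lifted. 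The cleanest way to handle it is to note that some $2r$-set pattern count must be nonzero while its expectation is $O(p(1-p)n^{2r})<1$ apart from the constant patterns. That requires a small separate check, which I expect to be routine.
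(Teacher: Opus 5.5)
As written, the proof has a gap: the case $mp(1-p)<4$ (with $m=\binom nr$) is left open, and the check you sketch for it does not work. If $p(1-p)\approx 4/m$, then $p(1-p)n^{2r}$ is of order $n^r$, not less than $1$. The gap is self-inflicted, though. The ``obstacle'' that made you abandon your first attempt is not real, and that attempt is exactly the paper's proof. With $q=p(1-p)$ and $u\le c\,qn^r$, Lemma~\ref{lem:order-lifting} gives $|x-\mu|\le Cc\,q$. This factor $q$ pays for the large factor $\mu=pm$:
\[
|x^2-\E[e(\HH)^2]|\ge mq-|x-\mu|\,(2\mu+|x-\mu|)\ge mq-2Cc\,p\cdot mq-(Cc)^2q\ge\tfrac12\,mq
\]
for $c$ small, because the relevant ratio is $p\le1$, not $pm$. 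Combine this with your upper bound $|x^2-\E[e(\HH)^2]|\le C_ru\le C_rc\,qn^r$ and with $m\ge n^r/r^r$; the result is a contradiction. The paper argues precisely this way, with $\delta=e(H)-pN$ and $|\delta|\le C_r\tau q$.

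Your centered detour is also valid up to its last line. You express $\sum_{S\ne T}(1_S-p)(1_T-p)$ as $\sum_{v(F)=2r}g(F)N(F,H)$ with bounded weights, normalizing pairs with $|S\cup T|=r+t$ by $\binom{n-r-t}{r-t}$. This is a tidy way to get $|(x-\mu)^2-(1-2p)(x-\mu)-mq|\le C_ru$ while working at order $2r$ directly. The case split arises only because you then bound $|x-\mu|$ by $1$, discarding the factor $q$. If you keep $|x-\mu|\le Cc\,q$, you get $mq\le 2Cc\,q+C_rc\,qn^r$. Dividing by $q$ gives a contradiction for every $n\ge2r$ once $c$ is small, with no separate small-$q$ case. (A minor slip: in your first paragraph $e(H)$ is $P_0$, not $P_r$.)
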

	
	\begin{proof}
		Put $q=p(1-p)$ and $N=\binom nr$. Suppose, for a contradiction, that $u_{2r}^{(r)}(H,p)<\tau qn^r$, where $\tau>0$ will be chosen in terms of $r$. Iterating Lemma~\ref{lem:order-lifting} backwards gives
		\[
		u_j^{(r)}(H,p)\le C_r\tau qn^{j-r}\qquad(r\le j\le2r).
		\]
		In particular, the edge error $\delta=e(H)-pN$ satisfies $|\delta|\le C_r\tau q$. If $X=e(\HH_{n,p}^{(r)})$, then $X$ is binomial with mean $pN$ and variance $qN$, so
		\[
		e(H)^2=(pN)^2+2pN\delta+\delta^2,
		\qquad \E X^2=(pN)^2+qN.
		\]
		It follows that
		\begin{equation}\label{eq:variance-gap}
			|e(H)^2-\E X^2|\ge qN-2pN|\delta|-\delta^2\ge\tfrac12qN
		\end{equation}
		for sufficiently small $\tau=\tau(r)$. Indeed, after division by $qN$, the two error terms are at most $2C_r\tau p$ and $C_r^2\tau^2q/N$, respectively. Thus one choice of $\tau$ works for all $n\ge2r$ and $0<p<1$.
		
		On the other hand, $e(H)^2$ counts ordered pairs of edges, allowing the two edges to coincide. Their union has between $r$ and $2r$ vertices. Grouping these pairs by the induced hypergraph on their union gives
		\begin{equation}\label{eq:edge-square-expansion}
			e(H)^2=\sum_F\alpha_FN(F,H),
		\end{equation}
		where
		\[
		\alpha_F=\bigl|\{(e_1,e_2)\in E(F)^2:e_1\cup e_2=V(F)\}\bigr|.
		\]
		The sum is over isomorphism types of $r$-graphs without isolated vertices and with $r\le v(F)\le2r$. Every ordered edge pair is counted exactly once, even when its union induces additional edges. The same coefficients therefore give $\E X^2=\sum_F\alpha_F\E N(F,\HH_{n,p}^{(r)})$.
		
		Subtracting these identities and using the assumed bounds at orders $r,\ldots,2r$, we obtain
		\[
		|e(H)^2-\E X^2|
		\le\sum_F\alpha_Fu_{v(F)}^{(r)}(H,p)
		\le C'_r\tau qn^r.
		\]
		There are only finitely many types in this sum, depending on $r$, and $n^{v(F)-r}\le n^r$. Since $N\ge c_rn^r$ for $n\ge2r$, the last inequality contradicts~\eqref{eq:variance-gap} when $\tau$ is small enough. This proves the lemma.
	\end{proof}
	
	\begin{proof}[Proof of Theorem~\ref{thm:lower-main}]
		For every $n$-vertex $r$-graph $H$,
		\[
		u_r^{(r)}(H,p)=\left|e(H)-p\binom nr\right|\ge D_r(n,p).
		\]
		Iterating Lemma~\ref{lem:order-lifting} proves the first assertion. Lemma~\ref{lem:variance-obstruction} gives the $p(1-p)$ lower bound at order $2r$, and another iteration of Lemma~\ref{lem:order-lifting} gives it at every order $k\ge2r$. Combining the two lower bounds, decreasing the constant if necessary, and then minimizing over $H$ proves the theorem.
	\end{proof}
	
	\section{Upper bound for uniform hypergraphs}\label{sec:upper}
	
	We prove Theorem~\ref{thm:general-upper} by starting from a random $r$-graph and changing $O_{r,\varepsilon}(\sqrt p\,n^{r/2})$ edges. Centered subhypergraph statistics express every induced-count discrepancy in a form suited to these changes. We correct the statistics on fewer than $2r$ vertices whose random fluctuations can exceed the required error scale, using a finite set of local switching directions. A sparse reservoir keeps these directions stable and controls the accumulated error in all larger statistics.
	
	Write $q=p(1-p)$. Complementation replaces $p$ by $1-p$, permutes the induced types, and preserves $D_r(n,p)$. We may therefore assume $0<p\le\tfrac12$, so $p/2\le q\le p$. Until a realization is fixed, let $H\sim\HH_{n,p}^{(r)}$, and put $T_0=\sqrt p\,n^{r/2}$. For an $r$-graph $G$ and $1\le a<r$, write $\Delta_a(G)$ for the maximum number of edges containing a fixed $a$-set. Constants with a hypergraph subscript may depend on its fixed isomorphism type; other implicit constants in this section may depend on $r$ and $\varepsilon$.
	
	\subsection{Centered statistics}\label{subsec:centered}
	
	For each $r$-set $e\subseteq[n]$, put $X_e=I_H(e)-p$, where $I_H(e)$ is its edge indicator. The variables $X_e$ are independent, with $\E X_e=0$, $\E X_e^2=q$ and $|X_e|\le1$. For a nonempty $r$-graph $J$ without isolated vertices, define
	\[
	S(J,H)=\sum_{J'\cong J}\prod_{e\in E(J')}X_e.
	\]
	The sum is over all unlabeled copies on $[n]$, regardless of whether their edges belong to $H$. Thus $S(J,H)$ is a sum over edge patterns, whereas $N(F,H)$ counts induced copies. Let $\cF_j$ be the finite family of such types with at most $j$ vertices, and set $\cF_\infty=\bigcup_{j\ge r}\cF_j$. Let $\Er$ denote the unique one-edge $r$-graph. Then $S(\Er,H)=e(H)-p\binom nr$.
	
	For a $j$-vertex $r$-graph $F$, write its induced-copy count as
	\[
	N(F,H)=\frac1{|\operatorname{Aut}(F)|}
	\sum_{\phi:V(F)\hookrightarrow[n]}
	\prod_{e\in E(F)}I_H(\phi(e))
	\prod_{e\in\binom{V(F)}r\setminus E(F)}(1-I_H(\phi(e))).
	\]
	Substituting $I_H(e)=p+X_e$, the constant term is the binomial expectation. For every other monomial, delete the vertices outside its nonempty edge support and group by the resulting type $J\in\cF_j$. As in~\cite{NPS2018}, this gives
	\begin{equation}\label{eq:centered-expansion}
		N(F,H)-\E N(F,\HH_{n,p}^{(r)})
		=\sum_{J\in\cF_j}a_{J,F}(n,p)S(J,H),\qquad
		|a_{J,F}(n,p)|=O_{r,j}(n^{j-v(J)}).
	\end{equation}
	Indeed, a fixed copy of $J$ has $O_{r,j}(n^{j-v(J)})$ extensions to the vertices of an induced-copy term. Each extension contributes a bounded number of products of $p$ and $1-p$, with signs. The coefficient bound is therefore uniform in $0\le p\le1$.
	
	For an $r$-set $t$, define the discrete derivative
	\[
	S_t(J,H)=S(J,H\cup\{t\})-S(J,H\setminus\{t\})
	=\sum_{\substack{J'\cong J\\t\in E(J')}}
	\prod_{e\in E(J')\setminus\{t\}}X_e.
	\]
	Here adjoining or deleting $t$ forces its indicator to one or zero. The derivative does not depend on $I_H(t)$. Adding $t$ changes $S(J,H)$ by $S_t(J,H)$, and removing it changes $S(J,H)$ by $-S_t(J,H)$.
	
	Distinct square-free monomials in centered independent variables are orthogonal: a variable in the symmetric difference of their supports occurs only once in their product. Since $J$ has no isolated vertices, its edge support determines its copy. The same is true for a derivative monomial after adjoining $t$. Writing $f=v(J)$ and $m=e(J)$, we obtain
	\begin{equation}\label{eq:centered-moments}
		\begin{aligned}
			\E S(J,H)^2&=\frac{(n)_f}{|\operatorname{Aut}(J)|}q^m\le C_Jp^mn^f,\\
			\E S_t(J,H)^2&=\frac{r!m(n-r)_{f-r}}{|\operatorname{Aut}(J)|}q^{m-1}
			\le C'_Jp^{m-1}n^{f-r}\qquad(f>r).
		\end{aligned}
	\end{equation}
	For the second identity, choose the edge of $J$ mapped to $t$, map its vertices in $r!$ ways, and inject the remaining vertices. Both statistics have mean zero; the one-edge derivative, excluded from the second line, equals one. These are the uniform-hypergraph versions of the centered moment identities in~\cite{NPS2018}.
	
	We prepare bounds for every type at once, so that the final graph will work at all induced orders. Choose positive constants $A_J$ with $\sum_{J\in\cF_\infty}C_J/A_J^2\le0.005$. Chebyshev's inequality and a countable union bound give, with probability at least $0.995$,
	\begin{equation}\label{eq:initial-random-bound}
		|S(J,H)|\le A_Jp^{e(J)/2}n^{v(J)/2}\qquad(J\in\cF_\infty).
	\end{equation}
	Types with more than $n$ vertices contribute zero. In particular, the constants are chosen independently of any target induced order.
	
	We also exclude $r$-sets whose derivatives are too large. Given $\kappa>0$, choose positive constants $L_J$ for all $v(J)>r$ with $\sum_{v(J)>r}C'_J/L_J^2\le\kappa/400$, and define
	\begin{equation}\label{eq:bad-derivatives}
		\cB=\left\{t:\ |S_t(J,H)|>L_Jp^{(e(J)-1)/2}n^{(v(J)-r)/2}
		\text{ for some }J\in\cF_\infty,\ v(J)>r\right\}.
	\end{equation}
	Another countable union bound gives $\Pp(t\in\cB)\le\kappa/400$. The event $t\in\cB$ is independent of $I_H(t)$, since every derivative in its definition is. Hence
	\[
	\E|\cB\cap E(H)|\le\frac{\kappa}{400}p\binom nr,
	\qquad
	\E|\cB\setminus E(H)|\le\frac{\kappa}{400}(1-p)\binom nr.
	\]
	Markov's inequality shows that the bounds
	\begin{equation}\label{eq:bad-set-size}
		|\cB\cap E(H)|\le\kappa pn^r,\qquad
		|\cB\setminus E(H)|\le\kappa n^r
	\end{equation}
	and~\eqref{eq:initial-random-bound} hold together with probability at least $0.99$. We will choose $\kappa$ after obtaining the local directions below.
	
	\subsection{Local directions}\label{subsec:local}
	
	Let $M\ge1$ be a parameter to be fixed in the construction. Call a type $F\in\cF_\infty$ with $r<v(F)<2r$ \emph{active} if
	\begin{equation}\label{eq:activity}
		n^{2r-v(F)}p^{e(F)-2}\ge M,
	\end{equation}
	and let $\mathcal L$ be the active family. Its size $D=|\mathcal L|$ is bounded in terms of $r$. The two-edge type on $r+1$ vertices is active whenever $n^{r-1}\ge M$, so $\mathcal L$ is nonempty for sufficiently large $n$. The activity condition selects the statistics that may need direct correction. For an inactive type with $f$ vertices and $m$ edges, the initial bound already has the required scale, because
	\begin{equation}\label{eq:inactive-scale}
		p^{m/2}n^{f/2}
		=pn^{f-r}\bigl(n^{2r-f}p^{m-2}\bigr)^{1/2}
		<\sqrt M\,pn^{f-r}.
	\end{equation}
	The same activity condition will make each active switching direction stable under the reservoir changes.
	
	For $F\in\mathcal L$, set $d_F=p^{(e(F)-1)/2}n^{(v(F)-r)/2}$ and define
	\begin{equation}\label{eq:local-vector}
		W_t(G)=\left(\frac{S_t(F,G)}{d_F}\right)_{F\in\mathcal L},
	\end{equation}
	\begin{equation}\label{eq:global-R}
		R(G)=\left(\frac{S(F,G)}{d_F}\right)_{F\in\mathcal L}.
	\end{equation}
	The denominators are the standard-deviation scales in~\eqref{eq:centered-moments}. They are fixed throughout the switching procedure. The derivative identity becomes
	$R(G\cup\{t\})-R(G\setminus\{t\})=W_t(G)$.
	We write $W_t=W_t(H)$ for the random local vector. Its law is independent of $t$ by exchangeability, and~\eqref{eq:initial-random-bound} gives
	\begin{equation}\label{eq:R-initial}
		|S(\Er,H)|+\|R(H)\|\le C T_0.
	\end{equation}
	The constant can be chosen for all active families, independently of $M$, since there are finitely many types with fewer than $2r$ vertices.
	
	\begin{lemma}\label{lem:overlap-exponent}
		For $s\ge0$, put $g(s)=\binom{r+s}{r}-1$. If $r\ge3$, $1\le d\le r-1$ and $1\le j\le d-1$, then
		\[
		\frac{\min\{g(d-1),g(2j)/2\}}j\le A_r.
		\]
		Moreover, $g(s)/s$ is nondecreasing for positive integers $s$, and $\frac{g(d)}d\le A_r$ whenever $d\le\lfloor r/2\rfloor$.
		
	\end{lemma}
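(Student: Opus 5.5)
Write $h=\lfloor r/2\rfloor$. The plan is to first rewrite $A_r$ in terms of $g$. Since $\binom{2r-2}{r}=\binom{r+(r-2)}{r}$, we have $A_r=g(r-2)/h$. All three assertions then reduce to comparing values of $g(s)/s$, with two elementary facts about $h$: $h\le r-2$ for every $r\ge3$ (equality only at $r=3$), and $2h-2\le r-2$.

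\emph{Monotonicity.} I would prove the monotonicity statement first, since the other two rely on it. The increments are
\[
g(s+1)-g(s)=\binom{r+s}{r-1},
\]
and these are increasing in $s$, so $g$ is convex on the nonnegative integers. Moreover $g(0)=0$. For a convex sequence vanishing at $0$, the average slope $g(s)/s=\frac1s\sum_{i<s}(g(i+1)-g(i))$ is nondecreasing, because it is the mean of an increasing sequence of increments. This gives the claim.

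\emph{The bound $g(d)/d\le A_r$ for $d\le h$.} By monotonicity, $g(d)/d\le g(h)/h$. Since $h\le r-2$ and $g$ is increasing, $g(h)\le g(r-2)$. Hence $g(d)/d\le g(r-2)/h=A_r$.

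\emph{The main inequality.} Here $1\le j\le d-1\le r-2$, and I would split into two cases according to whether $j\ge h$ or $j<h$.

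If $j\ge h$, I use the first term of the minimum. Since $d-1\le r-2$, we get
\[
\frac{g(d-1)}{j}\le\frac{g(r-2)}{h}=A_r.
\]

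If $j\le h-1$, I use the second term. Then $2j\le 2h-2\le r-2$, so by monotonicity
\[
\frac{g(2j)}{2j}\le\frac{g(r-2)}{r-2}\le\frac{g(r-2)}{h}=A_r,
\]
where the last step uses $h\le r-2$. The left-hand side is exactly $\bigl(g(2j)/2\bigr)/j$.

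In both cases the minimum is at most $A_r$. The only delicate point is the case split at $j=h$, which ensures that in the second case $2j$ stays within the range $\le r-2$ where monotonicity applies. Everything else is routine, so I expect no real obstacle.
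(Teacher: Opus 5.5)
Your proof is correct, but for the main inequality it takes a different and shorter route than the paper. The paper splits according to whether $2j\le d-1$ and reduces both cases to the $d$-dependent bound $g(d-1)/\lfloor (d+1)/2\rfloor$. It must then show that this quantity is nondecreasing in $d$, using the identity $a\,g(2a)-(a+1)g(2a-1)=(r/2-1)g(2a-1)+r/2$, before evaluating at $d=r-1$ to get $A_r$. You instead split at the fixed threshold $j\ge\lfloor r/2\rfloor$ versus $j<\lfloor r/2\rfloor$, which lets you discard $d$ at once. You only need $d-1\le r-2$ in the first case and $2j\le 2\lfloor r/2\rfloor-2\le r-2$ in the second. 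So only monotonicity of $g$ and of $g(s)/s$ is used, and the parity bookkeeping and the identity disappear. The paper's route buys the sharper intermediate bound $\min\{g(d-1),g(2j)/2\}/j\le g(d-1)/\lceil d/2\rceil$ for each $d$, which would matter only if one wanted type-dependent exponents. For the uniform constant $A_r$, your argument suffices. For the monotonicity of $g(s)/s$, the paper uses the Vandermonde expansion $g(s)=\sum_{i\ge1}\binom ri\binom si$ termwise, while you use convexity from the Pascal increments together with $g(0)=0$; both are fine. One harmless slip: $\lfloor r/2\rfloor=r-2$ holds for $r=4$ as well as $r=3$, so the parenthetical ``equality only at $r=3$'' is inaccurate, though nothing in your argument depends on it.
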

	
	\begin{proof}
		Vandermonde's identity gives
		\[
		g(s)=\sum_{i\ge1}\binom ri\binom si.
		\]
		For each $i\ge1$, $\binom si/s=\binom{s-1}{i-1}/i$ is nondecreasing in $s$, so $g(s)/s$ is nondecreasing.
		
		If $2j\le d-1$, then
		\[
		\frac{\min\{g(d-1),g(2j)/2\}}j
		\le \frac{g(2j)}{2j}\le \frac{g(d-1)}{d-1}
		\le \frac{g(d-1)}{\lfloor(d+1)/2\rfloor}.
		\]
		If $2j>d-1$, then $j\ge\lfloor(d+1)/2\rfloor$, and the same final upper bound follows directly from the first term in the minimum. Thus it remains to bound $\frac{g(d-1)}{\lfloor(d+1)/2\rfloor}$ by $A_r$. Put $h(d)=g(d-1)/\lfloor(d+1)/2\rfloor$. If $d$ changes from $2a-1$ to $2a$, the denominator is unchanged and the numerator increases. The only nontrivial step is therefore from $d=2a$ to $d=2a+1$, where the denominator changes from $a$ to $a+1$. In that case
		\[
		a\,g(2a)-(a+1)g(2a-1)
		=(r/2-1)g(2a-1)+r/2\ge0.
		\]
		Here we used
		\[
		g(2a)-g(2a-1)=\binom{r+2a-1}{r-1}
		=\frac r{2a}\binom{r+2a-1}r
		=\frac r{2a}\bigl(g(2a-1)+1\bigr).
		\]
		Hence $h$ is nondecreasing, and
		\[
		\frac{g(d-1)}{\lfloor(d+1)/2\rfloor}
		\le \frac{g(r-2)}{\lfloor r/2\rfloor}=A_r.
		\]
		Finally, if $d\le\lfloor r/2\rfloor$, monotonicity of $g(s)/s$ gives
		\[
		\frac{g(d)}d\le\frac{g(\lfloor r/2\rfloor)}{\lfloor r/2\rfloor}
		\le\frac{g(r-2)}{\lfloor r/2\rfloor}=A_r.
		\]
	\end{proof}
	
	\begin{lemma}\label{lem:local-moments}
		Assume that $p(1-p)n^{1/A_r}\ge\varepsilon$ and $0<p\le1/2$. There are constants $C=C(r,\varepsilon)$ and $\lambda_0=\lambda_0(r,\varepsilon)>0$ such that, for every $M\ge1$ and every corresponding nonempty active family $\mathcal L$, the local vector $W_t$ satisfies
		\begin{equation}\label{eq:local-law}
			\E\|W_t\|^4\le C,\qquad
			\lambda_{\min}(\Cov W_t)\ge\lambda_0>0,
		\end{equation}
		for all sufficiently large $n$, with the threshold depending only on $r$ and $\varepsilon$.
	\end{lemma}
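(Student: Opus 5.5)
We prove the two bounds in \eqref{eq:local-law} separately, treating each coordinate $S_t(F,H)/d_F$ as a degree-$(e(F)-1)$ polynomial in the independent centered variables $X_e$, $e\ne t$.

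\emph{Fourth moment.} It suffices to bound $\E S_t(F,H)^4\le C_F d_F^4$ for each active $F$, since $D$ is bounded and $\|W_t\|^4\le D\sum_F (S_t(F,H)/d_F)^4$. Expand $S_t(F,H)^4$ as a sum over ordered quadruples of copies $F_1,\dots,F_4$ of $F$ containing $t$. A quadruple contributes only if every edge other than $t$ appearing in the union is covered at least twice. For such a quadruple with union having $v$ vertices and $u$ distinct non-$t$ edges, the contribution is at most $\E\prod|X_e|^{\text{mult}}\le q^{u}\le p^{u}$, since $|X_e|\le1$ and $\E|X_e|^k\le q$ for $k\ge2$. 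The number of such configurations is $O(n^{v-r})$. We must show $n^{v-r}p^{u}\le C n^{2(f-r)}p^{2(m-1)}$ with $f=v(F)$ and $m=e(F)$. The ``paired'' configurations $F_1=F_2$, $F_3=F_4$ sharing only $t$ give exactly the main term. Every other configuration is obtained by gluing, which saves vertices at the cost of losing edge multiplicity. Overlaps within the $(2r-1)$-vertex frame are controlled by counting how many $r$-sets can lie inside a set of $r+s$ vertices, which is where $g(s)=\binom{r+s}{r}-1$ enters.

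The key inequality is of the form: losing $j$ vertices (relative to the paired count) costs at most a factor $p^{-\min\{g(d-1),g(2j)/2\}}$ in edges, where $d=f-r\le r-1$. The ratio to the main term is then at most $n^{-j}p^{-\min\{g(d-1),g(2j)/2\}}$. By Lemma~\ref{lem:overlap-exponent} this is at most $(n^{1/A_r}p)^{-jA_r}\cdot O(1)\le \varepsilon^{-jA_r}$ under the density hypothesis $p\ge q\ge\varepsilon n^{-1/A_r}$. I expect this combinatorial bookkeeping to be the main obstacle: classifying the overlap patterns of four copies through $t$ and proving that the number of surviving distinct edges satisfies the stated bound. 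My plan is to first handle pairwise overlaps, bounding the edges of $F$ inside an $(r+s)$-set by $g(s)$, and then reduce general patterns to these by choosing an order of revealing $F_1,\dots,F_4$.

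\emph{Covariance.} By \eqref{eq:centered-moments} and orthogonality of distinct monomials, $\Cov W_t$ is diagonal. The reason is that distinct types $F\neq F'$ give distinct edge supports after adjoining $t$, and the $W_t$ coordinates have mean zero. Each diagonal entry equals $r!m(n-r)_{f-r}q^{m-1}/(|\operatorname{Aut}(F)|d_F^2)$. Using $q\ge p/2$, this is at least $c_F 2^{-(m-1)}>0$ for $n\ge2r$. Since $\mathcal L\subseteq\cF_{2r-1}$ is drawn from a finite family independent of $M$, taking $\lambda_0=\min_F c_F2^{-(m-1)}$ and $C$ as the maximum over this finite family gives constants independent of $M$ and of the particular active family. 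This completes the plan.
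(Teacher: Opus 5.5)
Your covariance argument is correct and is essentially the paper's. The fourth-moment part, however, has a genuine gap. Your argument never uses that $F$ is active, and the bound is false without activity.

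The failing quadruples are those with $j=d$, where all four rooted copies have the same outside vertex set (for instance $F_1=F_2=F_3=F_4$). There the product contains only $b\ge m-1$ distinct variables, so after normalization these quadruples contribute of order $n^{-d}p^{-(m-1)}$. Since $m-1$ can be as large as $g(d)>g(d-1)$, your claimed loss bound $\min\{g(d-1),g(2j)/2\}$ fails at $j=d$. Lemma~\ref{lem:overlap-exponent} is in any case stated only for $1\le j\le d-1$.

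Concretely, take $r=3$ (so $A_3=3$), $F=K_5^{(3)}$ and $p=2\varepsilon n^{-1/3}$, which satisfies the density hypothesis. For $p\le1/2$ every term of $\E S_t(F,H)^4$ is nonnegative, since $\E X_e^3=q(1-2p)\ge0$. The diagonal terms alone then give $\E S_t(F,H)^4/d_F^4\gtrsim(n^2p^9)^{-1}\asymp n$.

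The paper treats this case separately. If $d\le\lfloor r/2\rfloor$, then $m-1\le g(d)\le A_rd$ by the second part of Lemma~\ref{lem:overlap-exponent}, so $n^dp^{m-1}\ge(np^{A_r})^d\ge\varepsilon^{A_rd}$. If $d>\lfloor r/2\rfloor$, activity gives $n^dp^{m-1}=(n^{r-d}p^{m-2})\,n^{2d-r}p\ge Mnp\ge1$. This is exactly where the restriction to $\mathcal L$ enters, and $K_5^{(3)}$ at this density is indeed inactive.

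For $1\le j\le d-1$, the inequality $\ell\le\min\{g(d-1),g(2j)/2\}$ with $\ell=2(m-1)-b$ is the heart of the proof, and you only announce it. Your pairwise-overlap plan does give the $g(d-1)$ half: two copies with different outside sets share at most $g(d-1)$ variables, so $b\ge2(m-1)-g(d-1)$. The $g(2j)/2$ half is not a pairwise statement, and your plan does not supply the extra idea it needs. The paper writes $2\ell=b_3+2b_4$, where $b_k$ counts variables of multiplicity $k$. It then maps these variables injectively into non-root $r$-sets on $t$ plus $2j$ formal duplicate vertices (one for each outside vertex in three copies, two for each outside vertex in four), giving $b_3+2b_4\le g(2j)$.

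You also need two preliminary facts. First, in a surviving quadruple every outside vertex lies in at least two copies, so $j\ge0$. Second, at $j=0$ every variable occurs exactly twice. Note that $j=0$ includes non-paired quadruples, not only $F_1=F_2$, $F_3=F_4$, although these still contribute at the main-term order.
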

	
	\begin{proof}
		Since $A_r\ge3$, the density hypothesis implies
		\begin{equation}\label{eq:gen-density}
			p\ge\varepsilon n^{-1/A_r},\qquad
			np^{A_r}\ge\varepsilon^{A_r},\qquad
			p^2n\longrightarrow\infty,\qquad pn\gg\log n.
		\end{equation}
		
		For distinct types $F,G$, their derivative monomials have different supports. Indeed, a derivative monomial records the edge support of a rooted copy with the root edge $t$ removed. Adjoining $t$ recovers the full edge support, and since the types have no isolated vertices this edge support also recovers the vertex set of the rooted copy. Equal supports in two coordinates would therefore give the same rooted labeled hypergraph and force $F=G$. Thus $\Cov W_t$ is diagonal, and its diagonal entry for a type $F$ with $f$ vertices and $m$ edges is exactly
		\[
		\Var\left(\frac{S_t(F,H)}{d_F}\right)
		=\frac{r!m}{|\operatorname{Aut}(F)|}
		\frac{(n-r)_{f-r}}{n^{f-r}}(1-p)^{m-1}.
		\]
		Because $1-p\ge1/2$ and the number of types is finite, these entries have a common positive lower bound for sufficiently large $n$.
		
		Fix an active type $F$ and put $d=f-r$. In the expansion of $S_t(F,H)^4$, consider four rooted copies of $F$ containing $t$. Each contributes a monomial with $m-1$ variables. A quadruple contributes only if every variable occurs at least twice; call such a quadruple surviving. Since $F$ has no isolated vertices, every vertex outside $t$ in a surviving quadruple belongs to at least two of the four rooted copies. Indeed, a vertex belonging to only one rooted copy lies in a non-root input edge that cannot appear in any other copy, giving a variable of multiplicity one.
		
		Let $u$ be the number of vertices outside $t$ in the union of the four rooted copies. Since the four outside vertex sets have total size $4d$ and each vertex is used at least twice, $u\le2d$. If $u=2d$, then each outside vertex belongs to exactly two copies. Every input edge contains an outside vertex, because the only $r$-set contained in $t$ is the root edge itself. Thus a variable occurring three or four times would contain an outside vertex lying in three or four rooted copies, which is impossible in the case $u=2d$. Survival therefore forces every variable to occur exactly twice. These terms contribute $O_F(n^{2d}p^{2(m-1)})$.
		
		If $u=d$, then all four outside vertex sets are equal. There are $O_F(n^d)$ such quadruples, and the product contains at least the $m-1$ distinct variables coming from one copy. Since $\E|X_e|^a\le2p$ for $a\ge2$, their total contribution is $O_F(n^dp^{m-1})$.
		
		It remains to treat $d<u<2d$. Write $u=2d-j$, where $1\le j\le d-1$, and let $b$ be the number of distinct variables in the product. Put $\ell=2(m-1)-b$. For fixed $j$ and $\ell$, the number of such quadruples is $O_F(n^{2d-j})$: after choosing the outside union, there are only $O_F(1)$ choices for the four outside vertex subsets of size $d$ and for the four rooted embeddings of the fixed type $F$. In a surviving quadruple every variable has multiplicity at least two, and $\E|X_e|^a\le2p$ for every $a\ge2$. Hence the absolute expectation of the product is $O_F(p^b)=O_F(p^{2(m-1)-\ell})$. After division by $d_F^4=p^{2(m-1)}n^{2d}$, all quadruples with these values of $j$ and $\ell$ contribute $O_F(n^{-j}p^{-\ell})$.
		
		We now bound $\ell$. Let $b_2,b_3,b_4$ be the numbers of variables occurring exactly two, three and four times. Since
		\[
		2b_2+3b_3+4b_4=4(m-1),\qquad b=b_2+b_3+b_4,
		\]
		we have $2\ell=b_3+2b_4$. Write $g(s)=\binom{r+s}{r}-1$ for the number of possible non-root input edges on $t$ and $s$ additional vertices.
		
		First, because $u>d$, two rooted copies have different outside vertex sets. Their outside intersection has size at most $d-1$, so the two corresponding monomials share at most $g(d-1)$ variables: a shared non-root input edge must be contained in $t$ together with these common outside vertices, and there are at most $\binom{r+d-1}{r}-1=g(d-1)$ such $r$-sets. These two monomials already contain at least $2(m-1)-g(d-1)$ distinct variables, and adding the other two monomials can only enlarge the support. Therefore $b\ge2(m-1)-g(d-1)$, or equivalently $\ell\le g(d-1)$.
		
		Second, create formal vertices as follows. For each outside vertex belonging to three copies, create one formal copy; for each outside vertex belonging to all four copies, create two distinguishable formal copies; vertices belonging to exactly two copies create none. The total number of formal vertices is
		\[
		\sum_y(\text{multiplicity of }y-2)=4d-2(2d-j)=2j.
		\]
		Keep the vertices of $t$ unchanged. A variable occurring three times is mapped to the $r$-set obtained by replacing each outside vertex by its formal copy, using the first formal copy when two are available. A variable occurring four times is mapped to two $r$-sets, using all first formal copies and all second formal copies of its outside vertices. These assigned $r$-sets are distinct. If two assigned sets had different projections to the original vertices, they would clearly be distinct; if their projections were equal, they would correspond to the same input edge, and the only possible duplication is the two images of one fourfold variable, which are distinguished by first versus second formal copies. None is the root edge $t$. Thus $b_3+2b_4\le g(2j)$, so $2\ell\le g(2j)$. Together,
		\[
		\ell\le\min\{g(d-1),g(2j)/2\}.
		\]
		
		Combining the cases $u=2d$, $u=d$ and $d<u<2d$, and absorbing the finite sum over possible $\ell$, we obtain
		\begin{equation}\label{eq:gen-fourth-moment}
			\frac{\E S_t(F,H)^4}{d_F^4}
			\le C_F\left(1+
			\sum_{j=1}^{d-1}\frac1{n^jp^{\min\{g(d-1),g(2j)/2\}}}
			+\frac1{n^dp^{m-1}}\right).
		\end{equation}
		The sum is empty when $d=1$.
		
		By Lemma~\ref{lem:overlap-exponent}, $\min\{g(d-1),g(2j)/2\}\le A_rj$. Since $0<p\le1$, this gives
		\[
		n^jp^{\min\{g(d-1),g(2j)/2\}}
		\ge n^jp^{A_rj}
		=(np^{A_r})^j
		\ge\varepsilon^{A_rj},
		\]
		so every denominator in the sum in~\eqref{eq:gen-fourth-moment} is bounded below uniformly.
		
		For the last term, note first that $m-1\le g(d)$, since all non-root input edges of a rooted copy lie inside $t$ together with its $d$ outside vertices. If $d\le\lfloor r/2\rfloor$, then
		\[
		\frac{m-1}{d}\le\frac{g(d)}d\le A_r,
		\]
		so $m-1\le A_rd$. Since $0<p\le1$, this implies $n^dp^{m-1}\ge(np^{A_r})^d\ge\varepsilon^{A_rd}$. If $d>\lfloor r/2\rfloor$, activity and $2d-r\ge1$ give
		\[
		n^dp^{m-1}
		=\bigl(n^{r-d}p^{m-2}\bigr)n^{2d-r}p
		\ge Mnp.
		\]
		This is at least one for sufficiently large $n$, uniformly in $M\ge1$. Thus~\eqref{eq:gen-fourth-moment} is bounded in terms of $F,\varepsilon$. The possible active families are subfamilies of the finite set of types with $r<v(F)<2r$, so the dimensions and all fourth-moment constants are bounded in terms of $r$ and $\varepsilon$. Since $\|W_t\|^4\le D\sum_{F\in\mathcal L}|S_t(F,H)/d_F|^4$, this proves~\eqref{eq:local-law}, with constants independent of the activity parameter.
	\end{proof}
	
	We turn these moments into a finite set of useful directions. The following finite spanning directions lemma supplies small balls of positive probability whose centers affinely span the whole space. In a paired switching, differences between these centers will change the vector $R$ while preserving the edge count.
	
	\begin{lemma}\label{lem:affine-spread}
		For every integer $D\ge0$ and $\lambda,C>0$ there are $c,R_{\rm sp}>0$ such that, for each $\delta_*>0$, there are $0<\delta\le\delta_*$ and $\eta>0$ with the following property. Every centered $X\in\mathbb R^D$ satisfying $\lambda_{\min}(\Cov X)\ge\lambda$ and $\E\|X\|^4\le C$ admits $z_0,\ldots,z_D\in B(0,R_{\rm sp})$ such that
		\[
		\Pp(X\in B(z_i,\delta))\ge\eta\quad(0\le i\le D),\qquad
		\max_{1\le i\le D}|\inner{x}{z_i-z_0}|\ge c\|x\|
		\quad(x\in\mathbb R^D).
		\]
		When $D=0$, the covariance and spanning assertions are void. For $D\ge1$, the centers satisfy $\|z_i-z_j\|\ge(\lambda/2)^{1/2}$ for $i\ne j$. The constants $c,R_{\rm sp}$ depend only on $D,\lambda,C$ and are independent of $\delta_*$.
	\end{lemma}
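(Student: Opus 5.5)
The plan is a Paley--Zygmund argument on one-dimensional projections, combined with a finite covering of a large ball by $\delta$-balls and a greedy Gram--Schmidt choice of heavy centers. The case $D=0$ is trivial: take $z_0=0$, $\delta=\delta_*$, $\eta=1$. So assume $D\ge1$.

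\textbf{Step 1 (non-concentration near hyperplanes).} Fix a unit vector $x$ and a number $a$ with $|a|\le R$, where $R$ will be fixed depending on $D,\lambda,C$. Put $Y=(\inner Xx-a)^2$. Since $X$ is centered, $\E Y=\Var\inner Xx+a^2\ge\lambda$. Also $\E Y^2\le 8(\E\inner Xx^4+a^4)\le 8(C+R^4)$. Paley--Zygmund with threshold $\tfrac34\E Y$ then gives
\[
\Pp\bigl(|\inner Xx-a|\ge(3\lambda/4)^{1/2}\bigr)\ge\rho:=\frac{\lambda^2}{128(C+R^4)}.
\]
We will apply this with $a=\inner{z}{x}$ for a center $z\in B(0,R)$. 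Separately, Markov's inequality gives $\Pp(\|X\|>R)\le C/R^4$.

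\textbf{Step 2 (heavy balls).} Choose $R=R(D,\lambda,C)$ with $C/R^4\le\rho/4$; this is possible because $\rho\sim\lambda^2/R^4$ for large $R$ is not the right comparison, so instead fix $R\ge1$ first with $C/R^4\le\lambda^2/(2^{10}(C+1))$ and check that $\rho$ then dominates up to the factor $(C+R^4)\le 2R^4$, adjusting constants. Given $\delta_*$, take
\[
\delta=\min\bigl\{\delta_*,\tfrac1{10}\bigl((3\lambda/4)^{1/2}-(\lambda/2)^{1/2}\bigr)\bigr\}.
\]
Cover $B(0,R)$ by $N=N(D,R,\delta)$ balls $B(y,\delta)$ with $y\in B(0,R)$, and set $\eta=\rho/(4N)$. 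Call a center $y$ \emph{heavy} if $\Pp(X\in B(y,\delta))\ge\eta$. The light balls carry total mass at most $\rho/4$, and the tail carries at most $\rho/4$. Combining this with Step 1, for every heavy $z$ and every unit $x$ there is a heavy $y$ with
\[
|\inner{y-z}{x}|\ge(3\lambda/4)^{1/2}-\delta\ge(\lambda/2)^{1/2}+\delta .
\]
Indeed, the event in Step 1 has mass $\ge\rho$, so it must meet a heavy ball, and any point of that ball is within $\delta$ of its center. Heavy centers exist at all, since the heavy balls carry mass at least $1-\rho/2>0$.

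\textbf{Step 3 (greedy selection).} Let $z_0$ be any heavy center. Inductively, for $i=1,\dots,D$, let $x_i$ be a unit vector orthogonal to $\operatorname{span}\{z_1-z_0,\dots,z_{i-1}-z_0\}$, and use Step 2 to choose a heavy $z_i$ with $|\inner{z_i-z_0}{x_i}|\ge(\lambda/2)^{1/2}$. Then $z_i$ lies at distance at least $(\lambda/2)^{1/2}$ from the affine hull of $z_0,\dots,z_{i-1}$. This gives the pairwise separation $\|z_i-z_j\|\ge(\lambda/2)^{1/2}$.

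The vectors $v_i=z_i-z_0$ have norms at most $2R$. Their Gram--Schmidt residuals have norms at least $(\lambda/2)^{1/2}$. Hence the matrix $V$ with rows $v_i$ is lower-triangular in the orthonormal basis $x_1,\dots,x_D$, with diagonal entries at least $(\lambda/2)^{1/2}$ and off-diagonal entries at most $2R$ in absolute value. Its smallest singular value is therefore bounded below by some $s(D,\lambda,R)>0$, obtained by bounding $\|V^{-1}\|$ through back substitution. Consequently
\[
\max_i|\inner{x}{v_i}|\ge D^{-1/2}\|Vx\|\ge c\|x\|,\qquad c=s/\sqrt D .
\]
Both $c$ and $R_{\rm sp}=R$ depend only on $D,\lambda,C$, and not on $\delta_*$. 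Only $\delta$ and $\eta$ depend on $\delta_*$, through $N$.

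\textbf{Main obstacle.} The delicate point is making $c$ and $R_{\rm sp}$ independent of $\delta_*$ while also obtaining exactly the separation $(\lambda/2)^{1/2}$. This is why Paley--Zygmund is used at threshold $\tfrac34$ rather than $\tfrac12$: it leaves room to absorb the $\delta$-error. It is also why the lower bound on the spanning constant comes from the triangular structure of the greedy choice, rather than from the covering.
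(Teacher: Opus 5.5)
\paragraph{Gap in Step 2.} The radius $R$ cannot be chosen as you require, and the parenthetical about ``adjusting constants'' does not fix this. You want Step~1 for every heavy $z\in B(0,R)$, so the offset only satisfies $|a|\le R$. Your Paley--Zygmund mass is then $\rho=\lambda^2/(128(C+R^4))$, which decays like $R^{-4}$, just as the Markov tail bound $C/R^4$ does.

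To force the Step~1 event into a heavy ball you need at least $C/R^4<\tfrac34\rho$. But the hypotheses give $\lambda\le\E\inner{X}{x}^2\le(\E\|X\|^4)^{1/2}\le C^{1/2}$, so $\lambda^2\le C$, and
\[
\tfrac34\rho<\frac{3\lambda^2}{512R^4}\le\frac{3C}{512R^4}<\frac{C}{R^4}
\]
for every $R>0$. As written, nothing prevents the whole Step~1 event from lying in $\{\|X\|>R\}$, so the existence of the heavy $y$ is not established. The claim is in fact true, but for large offsets it holds for a different reason: the bulk of the distribution near the origin is then far from the hyperplane. A single Paley--Zygmund estimate with offset up to $R$ does not capture this.

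\paragraph{The missing idea.} Decouple the radius that controls the offset from the truncation radius. In your Step~3 every hyperplane passes through $z_0$, so the only offsets ever used are $\inner{z_0}{x_i}$. You therefore need Step~2 only for $z=z_0$, and $z_0$ should be chosen in a ball whose radius is fixed before $R$. This is what the paper does:
\begin{itemize}
\item With $R_0=(2C)^{1/4}$, Markov gives $\Pp(\|X\|\le R_0)\ge\tfrac12$. Hence some $\delta$-ball from a cover of $B(0,R_0)$, with center in $B(0,R_0)$, has mass bounded below; its center is $z_0$.
\item Then $\E\inner{u}{X-z_0}^4\le8(C+\|z_0\|^4)\le24C$, and Paley--Zygmund gives mass at least $\alpha=\lambda^2/(96C)$ for $\{\dist(X,L_j)\ge(\lambda/2)^{1/2}\}$. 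This bound does not depend on the outer radius.
\item Only afterwards is $R_{\rm sp}$ chosen with $\Pp(\|X\|>R_{\rm sp})\le\alpha/2$, followed by $\delta$, the nets and $\eta$.
\end{itemize}
In your framework the same reordering works. Fix $\rho_0$ using offsets at most $R_0+1$. Then choose $R$ with $C/R^4\le\rho_0/4$. A heavy $z_0$ with $\|z_0\|\le R_0+\delta$ exists because the light balls carry mass at most $\rho_0/4<\tfrac12$. With this change your Steps~2--3 go through; the $\tfrac34$ threshold and the $\delta$-margin are fine.

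\paragraph{A smaller slip in Step~3.} The vectors $x_1,\ldots,x_D$ need not be mutually orthogonal: $x_2\perp v_1$, but not necessarily $x_2\perp x_1$. So $V$ is triangular in the Gram--Schmidt basis of $v_1,\ldots,v_D$, not in $(x_i)$. In that basis the diagonal entries are $\dist(v_i,\operatorname{span}\{v_1,\ldots,v_{i-1}\})\ge|\inner{v_i}{x_i}|\ge(\lambda/2)^{1/2}$, so your singular-value bound survives. Alternatively, argue via $|\det V|\ge(\lambda/2)^{D/2}$ as the paper does.
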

	
	\begin{proof}
		If $D=0$, take $z_0=0$ and any sufficiently small $\delta$; then the assertion is immediate.
		Put $r_*=(\lambda/2)^{1/2}$ and $\delta=\frac12\min\{\delta_*,r_*/16\}$. For $R_0=(2C)^{1/4}$, Markov's inequality gives $\Pp(\|X\|\le R_0)\ge1/2$. Cover $B(0,R_0)$ by finitely many balls of radius $\delta$, with centers in $B(0,R_0)$. One covering ball has probability bounded below in terms of $D,R_0,\delta$; choose its center as $z_0$.
		
		Suppose $z_0,\ldots,z_j$ have been selected, with $j<D$, and let $L_j$ be their affine span. Choose a unit vector $u$ orthogonal to its direction space and put $Y=\inner{u}{X-z_0}$. Then $\E Y^2\ge\lambda$ and $\E Y^4\le8(C+\|z_0\|^4)\le24C$. Since the hyperplane $\{x:\inner{u}{x-z_0}=0\}$ contains $L_j$, the Paley--Zygmund inequality gives
		\[
		\Pp(\dist(X,L_j)\ge r_*)
		\ge\Pp(Y^2\ge\tfrac12\E Y^2)
		\ge\frac{\lambda^2}{96C}=: \alpha.
		\]
		Choose $R_{\rm sp}\ge R_0$, depending only on $C,\alpha$, so that $\Pp(\|X\|>R_{\rm sp})\le\alpha/2$. The set
		\[
		\{x\in B(0,R_{\rm sp}):\dist(x,L_j)\ge r_*\}
		\]
		has probability at least $\alpha/2$. A finite $\delta$-net with centers in this set has cardinality bounded in terms of $D,R_{\rm sp},\delta$. One of its balls has uniformly positive probability. Choose its center as $z_{j+1}$; then $\dist(z_{j+1},L_j)\ge r_*$.
		
		Iteration gives the centers and a common lower mass bound $\eta$. Every new center is at distance at least $r_*$ from all previous centers. Let $V$ be the matrix with columns $z_i-z_0$, for $1\le i\le D$. Their successive Gram--Schmidt distances are at least $r_*$, and their norms are at most $2R_{\rm sp}$. Hence
		\[
		|\det V|\ge r_*^D,\qquad
		\sigma_{\min}(V)\ge\frac{r_*^D}{(2R_{\rm sp}\sqrt D)^{D-1}}.
		\]
		The inequality $\max_i|\inner{x}{z_i-z_0}|\ge D^{-1/2}\|V^{\mathsf T}x\|$ proves the spanning assertion, with $c$ independent of $\delta_*$.
	\end{proof}
	
	For the present active family, apply the lemma to $W_t$, choosing $\delta_*$ small enough that $\delta_*\le c/32$ and the balls of radius $2\delta_*$ around the separated centers are disjoint. We obtain
	\begin{equation}\label{eq:spread}
		\Pp(W_t\in B(z_i,\delta))\ge\eta\quad(0\le i\le D),\qquad
		\max_{1\le i\le D}|\inner{x}{z_i-z_0}|\ge c\|x\|,
	\end{equation}
	where $0<\delta\le c/32$, $\|z_i\|\le R_{\rm sp}$, and the balls $B(z_i,2\delta)$ are disjoint. The centers may depend on the active family and on $(n,p)$, but the constants $c,R_{\rm sp},\delta,\eta$ are chosen uniformly over all active families and all $M\ge1$, by taking minima and maxima over the finitely many dimensions in~\eqref{eq:local-law}. Set $\kappa=\eta/(100r!)$, and choose the constants $L_J$ in~\eqref{eq:bad-derivatives} accordingly.
	
	The positive masses in~\eqref{eq:spread} must now be realized simultaneously by many $r$-sets in one hypergraph. Choose functions $\psi_i:\mathbb R^D\to[0,1]$, equal to one on $B(z_i,\delta)$ and zero outside $B(z_i,2\delta)$, with Lipschitz constants at most $1/\delta$. Their covariance is controlled by resampling common inputs.
	
	\begin{lemma}\label{lem:decoupling}
		For disjoint $r$-sets $t,s$, there is a coupling with independent vectors $\widetilde W_t,\widetilde W_s$, having the respective marginal laws of $W_t,W_s$, such that
		\[
		\E\|W_t-\widetilde W_t\|^2+\E\|W_s-\widetilde W_s\|^2=O_r(n^{-1}).
		\]
		Consequently, $|\Cov(\psi_i(W_t),\psi_i(W_s))|=O_{r,\varepsilon}(n^{-1/2})$.
	\end{lemma}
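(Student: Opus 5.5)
The plan is to decouple $W_t$ and $W_s$ by resampling the edge variables on which both depend. Each coordinate $S_t(F,H)$ is a polynomial in the variables $X_e$ with $e\ne t$. Call an $r$-set $e$ \emph{shared} if it lies in the support of some derivative monomial for $t$ and also in one for $s$. Because $F$ has fewer than $2r$ vertices, a rooted copy through $t$ has at most $r-1$ vertices outside $t$. So a shared set $e$ must meet both $t$ and $s$, or at least lie within bounded distance of both. Let $X'_e$ be independent copies of the shared variables. Define $\widetilde W_t$ by replacing, inside $W_t$, every shared variable $X_e$ with $X'_e$, and set $\widetilde W_s=W_s$. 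Then $\widetilde W_t$ depends only on the non-shared variables on the $t$-side together with the fresh copies. These are independent of the variables feeding $W_s$, so $\widetilde W_t$ and $W_s$ are independent, and each vector keeps its marginal law.

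It remains to bound $\E\|W_t-\widetilde W_t\|^2$ coordinatewise. The difference $S_t(F,H)-\widetilde S_t(F,H)$ is a sum over rooted copies of $F$ through $t$ that use at least one shared variable. Group the monomials by support, using both the original and the primed variables. Distinct monomials are orthogonal in the centred independent family, exactly as in~\eqref{eq:centered-moments}. Hence the second moment is at most a constant times the number of rooted copies using a shared edge, multiplied by $q^{m-1}$. A rooted copy containing an edge that meets $s$ must have one of its $d=f-r$ outside vertices inside $s$. This removes one free vertex, so there are $O_F(n^{d-1})$ such copies. After dividing by $d_F^2=p^{m-1}n^{d}$ the contribution is $O(n^{-1})$, since $q\le p$. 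Summing over the finitely many active types gives $O_r(n^{-1})$. The same argument applies to the $s$-side if one symmetrizes the construction. In the statement, the $W_s$ term is zero.

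The main point to verify is that only copies touching $s$ can involve shared variables. A shared edge $e$ used by a copy through $t$ lies in $t\cup(\text{outside vertices})$. For $e$ to also occur in a copy through $s$, it must intersect $s$ or contain outside vertices of that copy. I will simply define the shared set as all $r$-sets meeting $s$ on the $t$-side, and symmetrically all $r$-sets meeting $t$ on the $s$-side. With this definition, an $r$-set that meets neither $t$ nor $s$ may appear in both vectors. Such a set $e$ needs $r$ outside vertices, whereas every type has $d\le r-1$ outside vertices. So every edge of a rooted copy through $t$ other than those inside $t$ meets $t$. Therefore any common variable meets both $t$ and $s$, and the counting above applies. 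For the symmetric version, resample variables meeting both $t$ and $s$ in both vectors, using independent copies for each side.

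For the covariance, write $\psi=\psi_i$, which is bounded by $1$ and $1/\delta$-Lipschitz. Then
\[
|\Cov(\psi(W_t),\psi(W_s))|\le |\Cov(\psi(\widetilde W_t),\psi(\widetilde W_s))|+2\E|\psi(W_t)-\psi(\widetilde W_t)|+2\E|\psi(W_s)-\psi(\widetilde W_s)|.
\]
The first term vanishes by independence. By Cauchy--Schwarz, each of the other terms is at most $\delta^{-1}O(n^{-1/2})$. Since $\delta$ depends only on $r,\varepsilon$, this gives $O_{r,\varepsilon}(n^{-1/2})$.
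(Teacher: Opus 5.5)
Your proof is correct and follows the paper's strategy. You resample the common inputs, all of which meet both $t$ and $s$ because every input of $W_t$ meets $t$ and every input of $W_s$ meets $s$. You then bound the $L^2$ error using orthogonality of monomials and the $O_F(n^{d-1})$ count of rooted copies with an outside vertex in $s$, and finish with the Lipschitz bound and Cauchy--Schwarz.

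The differences from the paper are minor. You resample only on the $t$-side, which already gives independence, and you compute $\E\|W_t-\widetilde W_t\|^2$ directly by expanding the difference into pairwise orthogonal original and primed monomials. The paper instead resamples on both sides and uses conditional Efron--Stein with single-input influences grouped by $|e\cap t|$.
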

	
	\begin{proof}
		Every input edge of $W_t$ meets $t$ and differs from $t$: its supporting type contains $t$ and has fewer than $2r$ vertices. For disjoint $t,s$, all common inputs therefore lie in
		\[
		\mathcal C=\{e\in\tbinom{[n]}r:e\cap t\ne\varnothing,\ e\cap s\ne\varnothing\}.
		\]
		For each $e\in\mathcal C$, take two copies $X_e^{(t)},X_e^{(s)}$ that are mutually independent over all $e$ and independent of the original variables. Replace these inputs in $W_t$ using the first family and in $W_s$ using the second. The resulting vectors $\widetilde W_t,\widetilde W_s$ have the required marginals and depend on disjoint independent input families.
		
		Let $H^{(e)}$ be obtained by replacing $X_e$ with an independent copy $X'_e$. Conditional on the inputs outside $\mathcal C$, the vectors $W_t$ and $\widetilde W_t$ are independent and identically distributed. Denoting the sigma-field of those outside inputs by $\mathcal U$, conditional Efron--Stein, applied to each coordinate, gives
		\begin{equation}\label{eq:common-input-resampling}
			\begin{aligned}
				\E\|W_t-\widetilde W_t\|^2
				&=2\E\|W_t-\E(W_t\mid\mathcal U)\|^2\\
				&\le\sum_{e\in\mathcal C}\E\|W_t-W_t(H^{(e)})\|^2.
			\end{aligned}
		\end{equation}
		We estimate the squared influence of one input. For a coordinate $F$ with $f$ vertices and $m$ edges, its unnormalized change is
		\[
		S_t(F,H)-S_t(F,H^{(e)})
		=(X_e-X'_e)\sum_{\substack{F'\cong F\\t,e\in E(F')}}
		\prod_{g\in E(F')\setminus\{t,e\}}X_g.
		\]
		Distinct copies give distinct supports in this sum, because adjoining $t$ and $e$ recovers their complete edge sets. The monomials are orthogonal, with squared expectation $q^{m-2}$, and the independent factor $X_e-X'_e$ has squared expectation $2q$.
		
		If $|e\cap t|=a$, there are $O_F(n^{f-2r+a})$ copies containing both $t$ and $e$, or none when $f<2r-a$. After division by $d_F^2$, the squared influence is thus $O_F(n^{a-r})$. There are $O_r(n^{r-a-1})$ common inputs with this intersection size: after choosing their $a$ vertices in $t$, they must contain at least one vertex of the disjoint set $s$. Multiplying these two bounds and summing over $1\le a<r$ yields $O_F(n^{-1})$. The number of coordinates is bounded in terms of $r$, so~\eqref{eq:common-input-resampling} proves the claimed bound for $W_t$. Interchanging $t,s$ proves the other one.
		
		The coupled vectors are independent with the original marginal laws. Subtracting product expectations, using $0\le\psi_i\le1$, and applying the Lipschitz bound and Cauchy--Schwarz gives
		\[
		\begin{aligned}
			|\Cov(\psi_i(W_t),\psi_i(W_s))|
			&\le\E|\psi_i(W_t)-\psi_i(\widetilde W_t)|
			+\E|\psi_i(W_s)-\psi_i(\widetilde W_s)|\\
			&=O_{r,\varepsilon}(n^{-1/2}).
		\end{aligned}
		\]
	\end{proof}
	
	Define the candidate classes
	\[
	E_i^+=\{t\in E(H):W_t\in B(z_i,2\delta)\},\qquad
	E_i^-=\{t\notin E(H):W_t\in B(z_i,2\delta)\}.
	\]
	The superscripts indicate edges available for removal and nonedges available for addition. All classes are disjoint.
	
	\begin{lemma}\label{lem:candidate-concentration}
		For every fixed choice of $M\ge1$ and all sufficiently large $n$, with positive probability the bounds~\eqref{eq:initial-random-bound}, \eqref{eq:bad-set-size} and
		\begin{equation}\label{eq:good-candidates}
			|E_i^+\setminus\cB|\ge\kappa pn^r,\qquad
			|E_i^-\setminus\cB|\ge\kappa n^r\quad(0\le i\le D),\qquad
			\Delta_a(H)=O_r(pn^{r-a})\quad(1\le a<r).
		\end{equation}
		hold simultaneously.
	\end{lemma}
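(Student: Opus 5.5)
We will show that each of the listed events fails with small probability and then take a union bound. We already know that \eqref{eq:initial-random-bound} and \eqref{eq:bad-set-size} hold together with probability at least $0.99$. So it suffices to show two things. First, each of the $2(D+1)$ lower bounds in \eqref{eq:good-candidates} fails with probability $o(1)$, or at most some small fixed amount such as $0.001/(2D+2)$. Second, the codegree bounds fail with probability $o(1)$. Since $D$ is bounded in terms of $r$, a union bound then leaves positive probability for the intersection.

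\emph{Codegrees.} Fix $1\le a<r$ and an $a$-set $A$. The number of edges of $H$ containing $A$ is binomial with mean $\binom{n-a}{r-a}p=\Theta(pn^{r-a})$. By \eqref{eq:gen-density} we have $pn\gg\log n$, so $pn^{r-a}\ge pn\gg\log n$. A Chernoff bound therefore gives probability at most $\exp(-\Omega(pn^{r-a}))=n^{-\omega(1)}$ that this count exceeds twice its mean. A union bound over the $O(n^{a})$ choices of $A$ and over $a$ gives $\Delta_a(H)=O_r(pn^{r-a})$ with probability $1-o(1)$.

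\emph{Candidate classes.} Consider $Y_i^+=\sum_t I_H(t)\psi_i(W_t)$, which is a lower bound for $|E_i^+|$. The event $\{W_t\in\cdot\}$ and $\psi_i(W_t)$ are independent of $I_H(t)$, and $\psi_i\ge\mathbf 1_{B(z_i,\delta)}$. Hence \eqref{eq:spread} gives $\E Y_i^+\ge\eta p\binom nr$. For the variance we split pairs $(t,s)$ into two kinds.

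\begin{itemize}
\item Pairs with $t\cap s\ne\varnothing$: there are $O(n^{2r-1})$ of them, and each covariance is $O(p)$, since $|I_H(t)\psi_i|\le I_H(t)$. Their total contribution is $O(pn^{2r-1})$.
\item Disjoint pairs: the only input that can appear in one factor's indicator and in the other's local vector is $t$ or $s$ itself. Every input of $W_s$ meets $s$, so $t\notin$ inputs of $W_s$ when $t\cap s=\varnothing$. Thus $I_H(t)$ and $I_H(s)$ are independent of each other and of $(W_t,W_s)$, and the covariance equals $p^2\Cov(\psi_i(W_t),\psi_i(W_s))=O(p^2n^{-1/2})$ by Lemma~\ref{lem:decoupling}. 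Summing over $O(n^{2r})$ pairs gives $O(p^2n^{2r-1/2})$.
\end{itemize}

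Both contributions are $o((pn^r)^2)$, because $pn\to\infty$. Chebyshev's inequality then yields $Y_i^+\ge\tfrac12\eta p\binom nr\ge 2\kappa pn^r$ with probability $1-o(1)$; this uses $\kappa=\eta/(100r!)$ and $\binom nr\ge n^r/(2r!)$ for large $n$. The same argument with $1-I_H(t)\ge\tfrac12$ gives $|E_i^-|\ge 2\kappa n^r$ with probability $1-o(1)$.

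\emph{Removing $\cB$.} On the event \eqref{eq:bad-set-size}, we have $|E_i^+\setminus\cB|\ge|E_i^+|-|\cB\cap E(H)|\ge2\kappa pn^r-\kappa pn^r=\kappa pn^r$, and similarly $|E_i^-\setminus\cB|\ge\kappa n^r$. All events together then hold with probability at least $0.99-o(1)>0$ for large $n$.

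\emph{Main obstacle.} The delicate step is the variance of $Y_i^{\pm}$: we must check that the dependence between distinct $r$-sets is weak enough. Disjoint pairs are handled by Lemma~\ref{lem:decoupling}, together with the observation that edge indicators do not enter other local vectors. Intersecting pairs need only the crude count, because they are a vanishing fraction when $pn\to\infty$. The threshold for $n$ depends on $M$ only through the active family; since there are finitely many such families, uniformity in $M$ is not an issue.
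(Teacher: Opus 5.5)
Your proposal is correct and follows essentially the same route as the paper: you use the smoothed counts $\sum_t I_H(t)\psi_i(W_t)$, the independence of the root indicator from $W_t$, Lemma~\ref{lem:decoupling} for disjoint pairs, Chebyshev, a Chernoff and union bound for $\Delta_a(H)$, and finally subtract $\cB$ using~\eqref{eq:bad-set-size}. The differences are harmless: you bound intersecting-pair covariances by $O(p)$ rather than by $1$, so you need only $pn\to\infty$ instead of $p^2n\to\infty$, and your ``$1-I_H(t)\ge\tfrac12$'' should read $\E(1-I_H(t))=1-p\ge\tfrac12$.
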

	
	\begin{proof}
		The class sizes dominate the smoothed counts
		\[
		X_i^+=\sum_t I_H(t)\psi_i(W_t),\qquad
		X_i^-=\sum_t(1-I_H(t))\psi_i(W_t).
		\]
		Since $W_t$ is independent of $I_H(t)$, their expectations are at least $\eta p\binom nr$ and $\eta(1-p)\binom nr$, respectively. For disjoint $t,s$, neither root indicator occurs among the inputs of either local vector. Thus $I_H(t),I_H(s)$ are jointly independent of $(W_t,W_s)$, and
		\[
		\Cov(I_H(t)\psi_i(W_t),I_H(s)\psi_i(W_s))
		=p^2\Cov(\psi_i(W_t),\psi_i(W_s)).
		\]
		The analogous identity for nonedges has factor $(1-p)^2$.
		There are $O_r(n^r)$ diagonal pairs, $O_r(n^{2r-1})$ distinct intersecting pairs and $O_r(n^{2r})$ disjoint pairs. Bounding the intersecting-pair covariances by one and using Lemma~\ref{lem:decoupling} for disjoint pairs, we obtain
		\[
		\begin{aligned}
			\Var X_i^+&=O_{r,\varepsilon}(pn^r+n^{2r-1}+p^2n^{2r-1/2})=o(p^2n^{2r}),\\
			\Var X_i^-&=O_{r,\varepsilon}(n^{2r-1}+n^{2r-1/2})=o(n^{2r}).
		\end{aligned}
		\]
		The displayed terms come from diagonal, intersecting-root and disjoint-root pairs, respectively. Here only $p^2n\to\infty$ is used. Chebyshev's inequality, the choice $\kappa=\eta/(100r!)$, and a union bound over the finitely many indices $0\le i\le D$ show that every class has size at least $2\kappa pn^r$ or $2\kappa n^r$, respectively, with probability $1-o(1)$.
		
		For each $a$-set, its degree in $H$ is binomial with mean $p\binom{n-a}{r-a}$. Since $pn\gg\log n$, Chernoff's inequality and a union bound give $\Delta_a(H)=O_r(pn^{r-a})$ for every $1\le a<r$, with probability $1-o(1)$. The initial and bad-set bounds~\eqref{eq:initial-random-bound} and~\eqref{eq:bad-set-size} hold together with probability at least $0.99$. Intersecting this event with the candidate-size and degree events therefore gives positive probability for all sufficiently large $n$. On this event, \eqref{eq:bad-set-size} removes at most $\kappa pn^r$ edges and at most $\kappa n^r$ nonedges from the corresponding candidate classes, so~\eqref{eq:good-candidates} holds.
	\end{proof}
	
	Every candidate outside $\cB$ satisfies the derivative bounds in~\eqref{eq:bad-derivatives} for all $v(J)>r$. All constants in these preparation bounds are independent of $M$. In the construction below, we first choose the switching budget, then fix $M$ to obtain the required stability, and finally select $H$.
	
	\subsection{The switching construction}\label{subsec:construction}
	
	We select a sparse reservoir from the good candidates, correct the edge count, and then reduce the active local vector by pairs of switches. The next lemmas isolate the degree truncation, stability, paired descent and final error estimate.
	
	\begin{lemma}\label{lem:reservoir}
		Fix $K,A_{\rm s}>0$ with $\kappa A_{\rm s}>4K$. If $L=L(r,\varepsilon,K,A_{\rm s})$ is sufficiently large and $H$ satisfies~\eqref{eq:good-candidates}, then, for all sufficiently large $n$, there are subsets $Q_i^\pm\subseteq E_i^\pm\setminus\cB$ such that, writing $Q=\bigcup_i(Q_i^+\cup Q_i^-)$,
		\begin{equation}\label{eq:reservoir}
			\begin{gathered}
				|Q_i^\pm|\ge KT_0,\qquad |Q|=O_{r,\varepsilon,K,A_{\rm s}}(T_0),\qquad
				Q\cap\cB=\varnothing,\\
				\Delta_a(Q)\le1+L\sqrt p\,n^{r/2-a}\qquad(1\le a<r),
			\end{gathered}
		\end{equation}
		and, for every $1\le a<r$,
		\begin{equation}\label{eq:reservoir-pairs}
			\#\{\{t,s\}\subseteq Q:|t\cap s|\ge a\}
			=O_{r,\varepsilon,K,A_{\rm s},L}(pn^{r-a}).
		\end{equation}
	\end{lemma}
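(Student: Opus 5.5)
We select the reservoir at random from the good candidates and then delete a few elements to enforce the codegree caps. Fix each class $P_i^\pm=E_i^\pm\setminus\cB$; by~\eqref{eq:good-candidates}, $|P_i^+|\ge\kappa pn^r$ and $|P_i^-|\ge\kappa n^r$. Include each $t\in P_i^+$ independently with probability $\rho^+=A_{\rm s}T_0/(pn^r)$, and each $t\in P_i^-$ with probability $\rho^-=A_{\rm s}T_0/n^r$, truncating at one. Since $T_0=\sqrt p\,n^{r/2}$ and $p^2n\to\infty$, both are $o(1)$. Call the sampled sets $\widetilde Q_i^\pm$. Their expected sizes are at least $\kappa A_{\rm s}T_0>4KT_0$ and at most $O(A_{\rm s}T_0)$, using $|E_i^+|\le e(H)=O(pn^r)$. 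Because $T_0\to\infty$, Chernoff gives $|\widetilde Q_i^\pm|\ge3KT_0$ and $|\widetilde Q|=O(T_0)$ with high probability. The condition $Q\cap\cB=\varnothing$ is automatic.

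\textbf{Codegrees.} Fix an $a$-set $S$ with $1\le a<r$. The sampled $r$-sets containing $S$ split into edges and nonedges. The edges containing $S$ number at most $\Delta_a(H)=O(pn^{r-a})$ and are each kept with probability $\rho^+$. The nonedges number at most $n^{r-a}$ and are each kept with probability $\rho^-$. In both cases the expected number kept is $\mu_a=O(A_{\rm s}\sqrt p\,n^{r/2-a})$. When $\mu_a$ is large, which happens for small $a$, Chernoff and a union bound over the $n^a$ choices of $S$ give degree at most $L\sqrt p\,n^{r/2-a}$ once $L$ is large.

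The obstacle is the regime $\mu_a\lesssim\log n$, and especially $\mu_a\to0$, where concentration fails and the target is the additive ``$1+$''. Here I would not use a union bound. Instead, after sampling, delete from every class all elements lying in an $a$-set of degree exceeding $1+L\sqrt p\,n^{r/2-a}$, and show that few elements are deleted in expectation. An $r$-set $t$ is deleted only if it shares some $a$-set $S\subseteq t$ with at least $k=\lceil L\sqrt p\,n^{r/2-a}\rceil+1\ge1$ other sampled sets. The expected number of such $t$ is at most
\[
O(T_0)\cdot\binom ra\cdot\Pp\bigl(\mathrm{Bin}(N_S,\rho)\ge k\bigr)\le O(T_0)\,(e\mu_a/k)^k.
\]
This is $O(T_0)\,O(\mu_a)$ when $\mu_a$ is small, and $O(T_0)e^{-cL\mu_a}$ when $\mu_a$ is moderate. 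Markov's inequality then shows that at most $KT_0$ elements are removed from each class with probability at least $1/2$, once $L$ is large and $n$ is large (for $\mu_a\to0$ the bound is $o(T_0)$ directly). This leaves $|Q_i^\pm|\ge KT_0$ with the degree condition in~\eqref{eq:reservoir} holding deterministically. I expect this step to need the most care, since the exponent of $\sqrt p\,n^{r/2-a}$ can be negative or near zero depending on $a$ and $p$.

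\textbf{Pair count~\eqref{eq:reservoir-pairs}.} Count ordered pairs $(t,s)$ in $Q$ with $|t\cap s|\ge a$ by choosing $t\in Q$, then an $a$-subset $S\subseteq t$, then $s\ni S$. This gives at most
\[
|Q|\binom ra\Delta_a(Q)=O(T_0)\bigl(1+L\sqrt p\,n^{r/2-a}\bigr)=O\bigl(\sqrt p\,n^{r/2}+pn^{r-a}\bigr).
\]
The first term is dominated by $pn^{r-a}$ only when $\sqrt p\,n^{r/2-a}\gtrsim1$, so this crude bound is not enough otherwise. I would instead bound the expected number of such pairs in $\widetilde Q$ directly. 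There are $O(p\,n^{2r-a})$ pairs of candidates sharing at least $a$ vertices (weighted by edge/nonedge status through $\Delta_a(H)$ and the $\rho^\pm$), and their expected count is $O(\rho^2)$ times that, which is $O(A_{\rm s}^2pn^{r-a})$. Markov's inequality, intersected with the previous events, then gives~\eqref{eq:reservoir-pairs}; deletions only decrease the count. Finally, intersect the finitely many events of positive probability and fix a realization.
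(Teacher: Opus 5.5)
Your proposal is correct and follows the same skeleton as the paper. Your retention probabilities coincide with the paper's $\theta_\pm$. The paper likewise deletes every sampled set containing a crowded $a$-set and bounds the expected number of deletions by Markov's inequality. Two local steps are handled differently.

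\textbf{Deletions.} The paper avoids any Chernoff estimate, union bound or case split. On the crowded event $D_U-1>L\sqrt p\,n^{r/2-a}$, so
\[
\E\bigl[D_U\mathbf 1_{\{U\text{ crowded}\}}\bigr]\le\frac{\E[D_U(D_U-1)]}{L\sqrt p\,n^{r/2-a}}\le\frac{(\E D_U)^2}{L\sqrt p\,n^{r/2-a}}=O\Bigl(\frac{\sqrt p\,n^{r/2-a}}{L}\Bigr).
\]
Summing over all $a$-sets gives $O(T_0/L)$ deletions, uniformly in every regime of $\sqrt p\,n^{r/2-a}$. Your binomial-tail route also works, and your somewhat vague regime split can be made precise in one line. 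The correct threshold is $k=\lfloor L\sqrt p\,n^{r/2-a}\rfloor+1$ rather than $\lceil\cdot\rceil+1$, since the event is ``more than $L\sqrt p\,n^{r/2-a}$ others.'' This $k$ satisfies $k\ge\max\{1,L\sqrt p\,n^{r/2-a}\}$, so $e\mu_a/k=O(A_{\rm s}/L)$. For large $L$ this gives $(e\mu_a/k)^k\le e\mu_a/k=O(A_{\rm s}/L)$ in every regime, with no separate treatment of small $a$.

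\textbf{Pair count.} For~\eqref{eq:reservoir-pairs}, your first-moment computation over edge/edge, edge/nonedge and nonedge/nonedge pairs, followed by Markov, is valid, but it is unnecessary. The spurious $O(T_0)$ term in your crude bound comes from allowing $s=t$. Since $s\neq t$, each $t\in Q$ and $a$-set $U\subseteq t$ have only $d_Q(U)-1\le L\sqrt p\,n^{r/2-a}$ partners. The paper therefore obtains the bound
\[
\tfrac12\tbinom ra|Q|\,L\sqrt p\,n^{r/2-a}=O(pn^{r-a})
\]
deterministically from the degree cap, with no extra probabilistic event to intersect.
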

	
	\begin{proof}
		Independently retain each good candidate with probability
		\[
		\theta_+=\frac{A_{\rm s}}{\sqrt p\,n^{r/2}}\quad\text{on }E_i^+\setminus\cB,
		\qquad
		\theta_-=\frac{A_{\rm s}\sqrt p}{n^{r/2}}\quad\text{on }E_i^-\setminus\cB.
		\]
		Both probabilities are at most one for large $n$. By~\eqref{eq:good-candidates}, each sampled class has expected size at least $\kappa A_{\rm s}T_0>4KT_0$. Since $e(H)=O_r(pn^r)$, the expected total selected size is $O_{r,\varepsilon,K,A_{\rm s}}(T_0)$. Chernoff's inequality gives at least $2KT_0$ members in every class and $O_{r,\varepsilon,K,A_{\rm s}}(T_0)$ selected edges in total, with probability $1-o(1)$.
		
		For an $a$-set $U$, where $1\le a<r$, let $D_U$ be its selected degree. The numbers of edge and nonedge candidates containing it are $O_r(pn^{r-a})$ and $O_r(n^{r-a})$, respectively, so
		\[
		\E D_U\le O_r(pn^{r-a})\theta_++O_r(n^{r-a})\theta_-
		=O_{r,\varepsilon,K,A_{\rm s}}(\sqrt p\,n^{r/2-a}).
		\]
		Call $U$ crowded if $D_U>1+L\sqrt p\,n^{r/2-a}$. On this event,
		$D_U-1>L\sqrt p\,n^{r/2-a}$. Since the selections are independent,
		$\E[D_U(D_U-1)]\le(\E D_U)^2$, and hence
		\[
		\E\bigl[D_U\mathbf1_{\{U\text{ crowded}\}}\bigr]
		\le\frac{\E[D_U(D_U-1)]}{L\sqrt p\,n^{r/2-a}}
		\le\frac{C_{r,\varepsilon,K,A_{\rm s}}}L\sqrt p\,n^{r/2-a}.
		\]
		Delete every selected edge containing a crowded set of any size $1\le a<r$. The expected number deleted is at most
		\[
		\sum_{a=1}^{r-1}\sum_{U\in\binom{[n]}a}
		\E\bigl[D_U\mathbf1_{\{U\text{ crowded}\}}\bigr]
		\le\frac{C_{r,\varepsilon,K,A_{\rm s}}}L T_0.
		\]
		Choose $L$ so that the last bound is at most $KT_0/8$. By Markov's inequality, with probability at least $3/4$ at most $KT_0/2$ edges are deleted. This event intersects the preceding class-size and total-size events for large $n$. Fix such an outcome, and denote the surviving classes by $Q_i^\pm\subseteq E_i^\pm\setminus\cB$. Every class still has at least $KT_0$ members. The added $1$ in the truncation threshold makes~\eqref{eq:reservoir} meaningful also when $a>r/2$ and the power of $n$ is negative.
		
		These degree bounds also control the total number of intersecting pairs. Write $d_Q(U)$ for the number of edges of $Q$ containing $U$. For $t\in Q$ and $U\subseteq t$, the edge $t$ itself is counted in $d_Q(U)$. Thus, for each $1\le a<r$,
		\[
		\begin{aligned}
			\#\{\{t,s\}\subseteq Q:|t\cap s|\ge a\}
			&\le\frac12\sum_{t\in Q}\sum_{U\in\binom ta}(d_Q(U)-1)\\
			&\le\frac12\binom ra|Q|L\sqrt p\,n^{r/2-a}
			=O_{r,\varepsilon,K,A_{\rm s},L}(pn^{r-a}).
		\end{aligned}
		\]
	\end{proof}
	
	\begin{lemma}\label{lem:local-stability}
		Assume~\eqref{eq:reservoir}. Let $G$ be obtained from $H$ by toggling a subset of $Q$, each edge at most once, and let $t\in Q$ be unused. Then
		\[
		\|W_t(G)-W_t(H)\|=O_{r,\varepsilon,K,L}(M^{-1/2}).
		\]
		In particular, after $K,A_{\rm s},L$ have been fixed, $M$ can be chosen so that
		\begin{equation}\label{eq:local-stability}
			\|W_t(G)-W_t(H)\|\le c/16
		\end{equation}
		for every such $G$ and $t$.
	\end{lemma}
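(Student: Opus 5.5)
The plan is to expand the change of each coordinate of $W_t$ over the toggled reservoir edges, and to bound the resulting sums deterministically. The inputs are the degree bounds \eqref{eq:reservoir} on $Q$, the degree bounds $\Delta_a(H)=O_r(pn^{r-a})$ from \eqref{eq:good-candidates}, and the activity condition \eqref{eq:activity}. Fix $F\in\mathcal L$ with $f$ vertices and $m$ edges, and let $T\subseteq Q\setminus\{t\}$ be the toggled set. Toggling $e$ replaces $X_e$ by $X_e+\Delta_e$ with $|\Delta_e|=1$. Hence
\[
S_t(F,G)-S_t(F,H)=\sum_{\substack{F'\cong F\\ t\in E(F')}}\ \sum_{\varnothing\ne B\subseteq T\cap E(F')}\ \prod_{e\in B}\Delta_e\prod_{e\in E(F')\setminus(B\cup\{t\})}X_e .
\]
Since $f<2r$, any two edges of a copy $F'$ intersect. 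So every $s\in B$ meets $t$, and the outer sum runs only over copies containing $t$ together with at least one reservoir edge $s$ meeting $t$.

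The main term comes from a single toggled edge $s$ with $|s\cap t|=a$, where $1\le a<r$. By \eqref{eq:reservoir}, the number of such $s\in Q$ is at most $\binom ra(1+L\sqrt p\,n^{r/2-a})$. For each such $s$ there are $O_F(n^{f-2r+a})$ copies containing both $t$ and $s$. I will not bound the remaining factors $X_e$ by one. Instead I will use that $|X_e|=p$ on nonedges and $|X_e|\le1$ only on edges of $H$. Counting the copies whose other inputs are edges via $\Delta_a(H)=O_r(pn^{r-a})$ should give a weighted count of order $n^{f-2r+a}p^{m-2}$, up to the combinatorics of how the remaining edges overlap; in the worst case I will still keep one factor $\sqrt p$.

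With the $\sqrt p\,n^{r/2-a}$ part of the degree bound, dividing by $d_F=p^{(m-1)/2}n^{(f-r)/2}$ gives at most
\[
\frac{\sqrt p\,n^{r/2-a}\cdot n^{f-2r+a}}{p^{(m-1)/2}n^{(f-r)/2}}
=\bigl(n^{2r-f}p^{m-2}\bigr)^{-1/2}\le M^{-1/2}.
\]
This estimate is exactly why the activity condition was chosen, and it holds even with the crude weight one.

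The remaining regime is the additive $1$ in the reservoir degree bound, which matters when $\sqrt p\,n^{r/2-a}<1$, that is, for large overlaps $a$. There $Q$ has degree at most one on each $a$-subset of $t$, so only $O_r(1)$ edges $s$ occur. The crude count then no longer suffices, and the $p$-weighting of the other inputs becomes essential. I expect this step to be the main obstacle.

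First I will try the weighted count $n^{f-2r+a}p^{m-2}$ divided by $d_F$. I will then check it against $M^{-1/2}$ using activity, together with $\sqrt p\,n^{r/2-a}<1$ (which bounds $n^{a}$ below by $\sqrt p\,n^{r/2}$) and $p\ge\varepsilon n^{-1/A_r}$. If this fails for some types, the fallback is as follows. Write the contribution of a single $s$ as a second derivative of $S(F,H)$ in the directions $t,s$. Then use that $t\notin\cB$ and that every type $J$ obtained from $F$ by deleting edges is also controlled in \eqref{eq:bad-derivatives}, to get a bound of the right order $p^{(m-2)/2}n^{(f-2r+a)/2}$.

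Terms with $|B|\ge2$ will be handled by the same counting. Each additional toggled edge in a copy is another edge of $Q$ meeting $t$. Choosing it through $\Delta_{a'}(Q)$ costs at most a factor $1+L\sqrt p\,n^{r/2-a'}$, but it saves the corresponding $n^{r-a'}$ free vertex choices. So these terms are dominated by the single-edge case, and the pair bound \eqref{eq:reservoir-pairs} can absorb any leftover overlapping configurations. Summing over the finitely many $a$, $B$-shapes and coordinates $F\in\mathcal L$ gives $\|W_t(G)-W_t(H)\|=O_{r,\varepsilon,K,L}(M^{-1/2})$ with a constant independent of $M$. Choosing $M$ large then yields \eqref{eq:local-stability}.
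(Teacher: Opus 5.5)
Your main-term computation is the paper's. With $O(\sqrt p\,n^{r/2-a})$ reservoir edges meeting $t$ in $a$ vertices, $O_F(n^{f-2r+a})$ copies through $t$ and each such edge, and weight one per copy, dividing by $d_F$ gives $(n^{2r-f}p^{m-2})^{-1/2}\le M^{-1/2}$.

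\textbf{The gap.} It lies in what you call the main obstacle, the additive $1$. You bound the number of $s\in Q$ with $|s\cap t|=a$ by $\binom ra(1+L\sqrt p\,n^{r/2-a})$. But $t$ itself lies in $Q$ and contains every $a$-subset $U\subseteq t$, so $t$ is already counted in $d_Q(U)$. The number of \emph{other} reservoir edges $s$ with $s\cap t=U$ is therefore at most $d_Q(U)-1\le L\sqrt p\,n^{r/2-a}$. In particular, when $L\sqrt p\,n^{r/2-a}<1$ there are no such $s$ at all. Your own remark that $Q$ has degree at most one on each $a$-subset of $t$ already says this, since $t$ is that one edge. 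With this observation the crude weight-one count works for every $1\le a<r$, and the lemma follows at once, exactly as in the paper.

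\textbf{Why your remedies for the $+1$ regime fail.} As written, the proof does not close. The weighted count cannot produce the factor $p^{m-2}$. Inputs lying inside $t\cup s$ are fixed $r$-sets, and the codegree bounds $\Delta_a(H)$ say nothing about them. For example, take $r=3$, $F=K_4^{(3)}$ (active for large $n$) and $|t\cap s|=2$. The only copy lies on $t\cup s$, and toggling $s$ changes $S_t(F,\cdot)$ by $\pm X_{e_1}X_{e_2}$ for the two remaining triples. If both are edges of $H$, this equals $(1-p)^2$. Dividing by $d_F=p^{3/2}n^{1/2}$ gives order $\varepsilon^{-3/2}$ for $p$ of order $\varepsilon n^{-1/3}$, which the hypothesis allows when $r=3$. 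That is not $O(M^{-1/2})$. Only the absence of such $s$ from $Q$ saves the estimate.

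The fallback fails as well. The condition $t\notin\cB$ bounds the aggregate first derivatives $S_t(J,H)$. It gives no control over an individual second derivative $S_{t,s}(F,H)$.

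\textbf{Multi-edge terms.} The terms with $|B|\ge2$ need neither the extra counting nor \eqref{eq:reservoir-pairs}. One option is to telescope over the toggles as the paper does. Each step then changes $S_t(F,\cdot)$ by a second derivative at the current graph, which is a sum of unit-bounded products over copies containing $t$ and $s$. Alternatively, each copy contributes at most $2^m$ unit-bounded terms, so the whole change is at most $2^m\sum_{s\in T}\#\{F'\ni t,s\}$.
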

	
	\begin{proof}
		Let $G$ be obtained from $H$ by toggling a subset of $Q$, each edge at most once, and let $t\in Q$ be unused. Fix an active type $F$ with $f=v(F)$ and $m=e(F)$. Telescope the changes of $S_t(F,\cdot)$ over the earlier toggles. An earlier edge $s$ affects this derivative only through copies of $F$ containing both $t$ and $s$. If $|t\cap s|=a$, there are $O_F(n^{f-2r+a})$ such copies when $2r-a\le f$, and none otherwise. Each corresponding monomial changes by a product of centered indicators of absolute value at most one.
		
		Since $f<2r$, a contributing edge $s$ must meet $t$. For a given $a$, the number of other reservoir edges meeting $t$ in exactly $a$ vertices is at most
		\[
		\sum_{U\in\binom ta}(d_Q(U)-1)
		\le\binom ra L\sqrt p\,n^{r/2-a}.
		\]
		Consequently,
		\[
		\begin{aligned}
			\frac{|S_t(F,G)-S_t(F,H)|}{d_F}
			&\le C_F L\sum_{a=2r-f}^{r-1}
			\frac{\sqrt p\,n^{r/2-a}n^{f-2r+a}}
			{p^{(m-1)/2}n^{(f-r)/2}}\\
			&=O_{r,\varepsilon,K,L}\!\left(
			\frac1{\sqrt{n^{2r-f}p^{m-2}}}\right)
			=O_{r,\varepsilon,K,L}(M^{-1/2}).
		\end{aligned}
		\]
		There are only finitely many possible local coordinates, so the coordinate estimates give the asserted norm bound. Choosing $M$ sufficiently large after $K,A_{\rm s},L$ gives~\eqref{eq:local-stability}. The estimate is uniform over the subset and order of earlier toggles.
	\end{proof}
	
	\begin{lemma}\label{lem:paired-descent}
		Assume~\eqref{eq:reservoir} and~\eqref{eq:local-stability}. If $K$ is larger than a constant depending only on the uniform initial and geometric bounds, then there are an $r$-graph $H_*$ and a set $T\subseteq Q$ of toggled $r$-sets such that
		\begin{equation}\label{eq:toggle-budget}
			e(H_*)=m_*,\qquad \|R(H_*)\|=O_{r,\varepsilon}(1),\qquad
			|T|=O_{r,\varepsilon}(T_0),\qquad T\cap\cB=\varnothing,
		\end{equation}
		where $m_*$ is an integer nearest to $p\binom nr$.
	\end{lemma}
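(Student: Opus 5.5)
We construct $H_*$ from $H$ in two phases, using only unused reservoir $r$-sets, so that $T\subseteq Q$, $T\cap\cB=\varnothing$ and each $r$-set is toggled at most once. Throughout we track $R(\cdot)$ through the identity $R(G\cup\{t\})-R(G\setminus\{t\})=W_t(G)$. By Lemma~\ref{lem:local-stability}, every unused $t\in Q_i^\pm$ satisfies
\[
\|W_t(G)-z_i\|\le 2\delta+c/16\le c/8 .
\]

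\emph{Phase 1: correcting the edge count.} Put $\Delta_0=e(H)-m_*$. By~\eqref{eq:R-initial}, $|\Delta_0|\le CT_0+1$. If $\Delta_0>0$, we delete $\Delta_0$ edges of $Q_0^+$; otherwise we add $|\Delta_0|$ nonedges of $Q_0^-$. This is possible once $K>C+1$. Each toggle changes $R$ by a vector of norm at most $R_{\rm sp}+1$. Hence after this phase $e(G)=m_*$ and $\|R(G)\|=O(T_0)$, with $O(T_0)$ toggles used.

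\emph{Phase 2: paired descent.} While $\|R(G)\|>\rho$, where $\rho$ is a constant fixed below, we perform one paired switch. Let $x=R(G)$. By~\eqref{eq:spread}, some $i\ge1$ has $|\inner{x}{z_i-z_0}|\ge c\|x\|$. Suppose $\inner{x}{z_i-z_0}\ge c\|x\|$. We remove an unused $t\in Q_i^+$ and add an unused $s\in Q_0^-$. The count $e(G)$ is unchanged, and $R$ changes by $v=-W_t(G)+W_s(G')$, where $G'=G\setminus\{t\}$. Since $t$ and $s$ are unused, Lemma~\ref{lem:local-stability} applies to both. Thus $\|v-(z_0-z_i)\|\le c/4$, so
\[
\inner{x}{v}\le-c\|x\|+(c/4)\|x\|=-\tfrac34c\|x\| ,
\]
while $\|v\|\le 2R_{\rm sp}+1=:B$. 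Consequently
\[
\|x+v\|^2\le\|x\|^2-\tfrac32c\|x\|+B^2 .
\]
If the inequality has the opposite sign, we instead use $Q_0^+$ and $Q_i^-$. With $\rho=2B^2/c$, each step decreases $\|x\|$ by at least $c/2\cdot\|x\|/(\|x\|+\ldots)$. More simply, each step decreases $\|x\|^2$ by at least $\tfrac34 c\|x\|\ge\tfrac34c\rho$. Since the initial value of $\|x\|$ is $O(T_0)$, the decrease from $\|x\|$ to about $\|x\|-c/2$ per step means at most $O(T_0)$ steps are needed, for instance $2\|R\|/c\cdot(1+o(1))$ of them. Each step uses one member of $Q_i^\pm$ and one of $Q_0^\mp$. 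Therefore $K$ larger than a constant multiple of $(C+1)(1+B/c)$ guarantees that unused members always remain. When the loop stops, $\|R(H_*)\|\le\rho=O_{r,\varepsilon}(1)$ and $|T|=O(T_0)$.

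The main obstacle is to make sure that stability holds for every switch, including the added $s$. Its vector is evaluated in $G'$, which already includes the toggle of $t$. This is why Lemma~\ref{lem:local-stability} must be stated for arbitrary subsets of $Q$ toggled once, with $s$ unused. The other delicate point is the order of constants. $K$ depends only on $C,c,R_{\rm sp}$, all of which are uniform in $M$. After that, $A_{\rm s}$, $L$ and $M$ are chosen in the order of the previous lemmas, so no circularity arises.
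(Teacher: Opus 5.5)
Your proposal is correct and follows the paper's two-stage argument: fix the edge count with $O(T_0)$ toggles from $Q_0^\pm$, then make edge-count-preserving paired switches along $\pm(z_i-z_0)$, each lowering $\|R\|$ by a constant, with $K$ chosen in terms of $C,c,R_{\rm sp}$ only. The one real variation is that you evaluate the added edge's vector at the intermediate graph $G\setminus\{t\}$ and apply Lemma~\ref{lem:local-stability} there; this validly removes the paper's need for vertex-disjoint paired edges and its $r\Delta_1(Q)=o(T_0)$ partner count. The remaining slips affect only constants and exposition: $R_{\rm sp}+1$ and $2R_{\rm sp}+1$ should be $R_{\rm sp}+c/8$ and $2R_{\rm sp}+c/4$, and the step count should come from $\|x+v\|\le\|x\|-c/2$, not from a constant drop of $\|x\|^2$.
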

	
	\begin{proof}
		We use the two-stage correction of~\cite{NPS2018}. Let $m_*$ be an integer nearest to $p\binom nr$. Remove unused edges from $Q_0^+$ or add unused edges from $Q_0^-$ until the edge count is $m_*$. By~\eqref{eq:R-initial}, this takes $O_{r,\varepsilon}(T_0)$ toggles. Every current local vector used here has norm at most $R_{\rm sp}+2\delta+c/16$, by~\eqref{eq:local-stability}. Hence the resulting graph $H_0$ has $\|R(H_0)\|=O_{r,\varepsilon}(T_0)$, with a constant determined by the initial and geometric bounds.
		
		We now preserve the edge count by pairing one addition with one removal. To move approximately in direction $z_i-z_0$, add an unused edge from $Q_i^-$ and remove one from $Q_0^+$; for the opposite direction, add from $Q_0^-$ and remove from $Q_i^+$. Choose the two edges vertex-disjoint. No copy of a local type contains both, since its order is less than $2r$, so their effects on $R$ add without a mixed term. The two original local vectors lie within $2\delta$ of the corresponding centers, and each subsequent variation is at most $c/16$. Thus
		\[
		\|\Delta R\mp(z_i-z_0)\|\le4\delta+c/8\le c/4.
		\]
		For $x=R(G)$, choose the index and sign in~\eqref{eq:spread} so that
		$\inner{x}{\pm(z_i-z_0)}\le-c\|x\|$.
		It follows that
		\[
		\inner{x}{\Delta R}\le-\tfrac34c\|x\|,
		\qquad \|\Delta R\|\le2R_{\rm sp}+c/4,
		\]
		and hence
		\[
		\|x+\Delta R\|^2
		\le\|x\|^2-\tfrac32c\|x\|+(2R_{\rm sp}+c/4)^2.
		\]
		If $\|x\|\ge\max\{c/4,(2R_{\rm sp}+c/4)^2/c\}$, the right-hand side is at most $(\|x\|-c/4)^2$. Each paired move therefore decreases the norm by at least $c/4$ until it reaches a fixed ball. Starting from $H_0$, this takes at most $B T_0$ paired moves, after enlarging a constant $B=B(r,\varepsilon)$ which also bounds the number of single-edge corrections in the first stage. This budget depends only on the uniform initial and geometric constants.
		
		Choose $K>4B$. Since the first stage uses only candidates from $Q_0^\pm$ and each paired move uses two candidates in total, no candidate class loses more than $3BT_0$ members. Hence every relevant class retains $\Omega_{r,\varepsilon}(T_0)$ unused members throughout the process. After the first edge of a pair has been chosen, at most
		\[
		r\Delta_1(Q)=O_{r,\varepsilon}(1+T_0/n)=o(T_0)
		\]
		members of the other class meet it. An unused disjoint partner therefore always exists, and~\eqref{eq:toggle-budget} follows.
	\end{proof}
	
	\begin{lemma}\label{lem:global-switching-error}
		Assume~\eqref{eq:reservoir-pairs} and~\eqref{eq:toggle-budget}. Then
		\begin{equation}\label{eq:centered-target}
			|S(\Er,H_*)|=D_r(n,p),\qquad
			|S(J,H_*)|=O_{J,\varepsilon}(pn^{v(J)-r})
			\quad(J\ne \Er).
		\end{equation}
		The estimates hold simultaneously for all $J\in\cF_\infty$.
	\end{lemma}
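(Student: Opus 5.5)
The first assertion is immediate: $S(\Er,H_*)=e(H_*)-p\binom nr=m_*-p\binom nr$, and $m_*$ is a nearest integer, so its absolute value is $D_r(n,p)$. For $J\ne\Er$ we split according to whether $J$ is active, inactive with $v(J)<2r$, or has $v(J)\ge2r$ (types with $v(J)=r$ are only $\Er$).

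\emph{Active types.} For $J\in\mathcal L$, $\|R(H_*)\|=O_{r,\varepsilon}(1)$ gives $|S(J,H_*)|=O(d_J)$. The activity condition gives
\[
d_J=p^{(m-1)/2}n^{(f-r)/2}\le pn^{f-r}\bigl(n^{2r-f}p^{m-2}\bigr)^{1/2}p^{-1/2}n^{-r/2}\cdot n^{r/2}p^{1/2}.
\]
More directly, $d_J^2/(pn^{f-r})^2=p^{m-3}n^{r-f}$. This is small because $f>r$ and $m\ge2$; the case $m=2$ with $p$ small needs $n^{f-r}p\ge1$, which holds since $pn\to\infty$. Hence $|S(J,H_*)|=O(pn^{f-r})$.

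\emph{All other types.} Write $S(J,H_*)-S(J,H)$ by telescoping over the toggles in $T$ in some order. By the multilinearity of $S$ in the $X_e$, the exact change is
\[
\sum_{\varnothing\ne U\subseteq T}\Bigl(\prod_{s\in U}\Delta_s\Bigr)\sum_{J'\cong J,\,U\subseteq E(J')}\prod_{e\in E(J')\setminus U}X_e(H),
\]
where $\Delta_s=\pm1$. This is the expansion of $\prod_e(X_e+\Delta_e\mathbf 1_{e\in T})$.

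The $|U|=1$ terms total at most $\sum_{s\in T}|S_s(J,H)|\le|T|L_Jp^{(m-1)/2}n^{(f-r)/2}$, since $T\cap\cB=\varnothing$. With $|T|=O(\sqrt p\,n^{r/2})$ this is $O(p^{m/2}n^{f/2})$. The initial term satisfies $|S(J,H)|\le A_Jp^{m/2}n^{f/2}$. Both are therefore $O(p^{m/2}n^{f/2})$, and we must show this is $O(pn^{f-r})$. For inactive $J$ with $r<f<2r$, this is exactly~\eqref{eq:inactive-scale}, with constant $\sqrt M$ ($M$ is now fixed). For $f\ge2r$, we have $p^{m/2}n^{f/2}/(pn^{f-r})=(p^{m-2}n^{2r-f})^{1/2}\le1$ when $m\ge2$. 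When $m=1$, $f=r$, which is excluded.

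\emph{Main obstacle: terms with $|U|\ge2$.} We bound them crudely by counting, using $|X_e|\le1$ but not ignoring sparsity, which is needed when $p$ is small. For $U$ with $|U|=k\ge2$, the edges of $U$ inside a copy of $J$ form a $k$-edge configuration. We bound the number of such $U\subseteq T$ via connected components of the configuration. Each component spanning $v$ vertices with $k'$ edges can be counted using the reservoir pair bound~\eqref{eq:reservoir-pairs} and the degree bounds $\Delta_a(Q)$. For a component built along a spanning sequence with intersections $a_i$, the count is $O(|T|\prod_i\sqrt p\,n^{r/2-a_i})$. The remaining vertices extend in $n^{f-v(U)}$ ways. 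For the free edges of $J'$ not in $U$ we should not lose $p$: instead of absolute values, take the conditional structure.

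The cleanest route is probably to do this with second moments. We fix the toggle pattern, but it depends on $H$, so the trivial bound $|X_e|\le1$ is what we can safely use. Then we must check that each term is $O(pn^{f-r})$: a component with $k'\ge2$ edges on $v$ vertices contributes $\sqrt p^{\,k'}n^{k'r/2-\sum a_i}\cdot n^{f-v}$, where $v=k'r-\sum a_i$. This gives $p^{k'/2}n^{f-k'r/2}$, which is at most $pn^{f-r}$ when $k'\ge2$. Disconnected unions multiply, giving again $\le pn^{f-r}$ for $k\ge2$. Summing over finitely many configurations finishes the proof, with constants depending on $J,\varepsilon$ (through $K,A_{\rm s},L,M$).
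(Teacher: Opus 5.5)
Your handling of the one-edge term, of the active types (via $\|R(H_*)\|=O(1)$ and $d_J\le pn^{f-r}$), and of the zeroth- and first-order parts of the multilinear expansion for the other types matches the paper. Those parts use the initial bound, $T\cap\cB=\varnothing$, and then \eqref{eq:inactive-scale} or $m\ge2$ when $f\ge2r$. The difference, and the weak point, is the remainder with $|U|\ge2$.

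Your count $O(|T|\prod_i\sqrt p\,n^{r/2-a_i})$ for a connected configuration silently drops the $+1$ in $\Delta_a(Q)\le1+L\sqrt p\,n^{r/2-a}$. For the second edge of a chain this is harmless. Its intersection lies inside the first edge, so that edge can be subtracted from the degree, which is what \eqref{eq:reservoir-pairs} encodes. A later edge, however, may meet the current union in an $a_i$-set $W$ spread over several earlier edges. Then only $d_Q(W)\le1+L\sqrt p\,n^{r/2-a_i}$ is available, and for $a_i>r/2$ this is $\Theta(1)$, not $O(\sqrt p\,n^{r/2-a_i})$.

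For example, take $r=3$, $e_1=\{1,2,3\}$, $e_2=\{3,4,5\}$ and $e_3\supseteq\{1,4\}$. Nothing in the reservoir bounds prevents there being as many such triples as intersecting pairs, that is, of order $pn^2$. Your formula claims $O(|T|p)=O(p^{3/2}n^{3/2})$. So the bound $p^{k'/2}n^{f-k'r/2}$ is not justified as written.

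The conclusion does survive. Bound the first two edges of a component by \eqref{eq:reservoir-pairs}. Each later edge then costs at most $(1+L\sqrt p\,n^{r/2-a_i})\,n^{a_i-r}=O(1)$ relative to the extension count, so every configuration still contributes $O(pn^{f-r})$.

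This repair also shows the component analysis is superfluous, which is the paper's route:
\begin{itemize}
\item charge every term with at least two toggled edges to one fixed pair of them, with at most $2^{m}$ terms per copy;
\item bound the copies through a pair meeting in $a$ vertices by $O(n^{f-2r+a})$;
\item use \eqref{eq:reservoir-pairs} for $a\ge1$ and $|T|^2=O(pn^r)$ for $a=0$.
\end{itemize}
This gives $|R_{\ge2}|=O(pn^{f-r})$ at once. The sharper per-component bound you aim for is never needed. The trivial bound $|X_e|\le1$ on the untoggled factors suffices, so your remarks about second moments and about not losing powers of $p$ on the free edges can be dropped.
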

	
	\begin{proof}
		Fix $J\in\cF_\infty$ with $f=v(J)>r$ and $m=e(J)$. Let $\sigma_e=I_{H_*}(e)-I_H(e)$, which belongs to $\{-1,0,1\}$ and is nonzero precisely on $T$. For a fixed copy $J'\cong J$, multilinearity gives
		\[
		\prod_{e\in E(J')}(X_e+\sigma_e)
		=\sum_{A\subseteq E(J')\cap T}
		\left(\prod_{e\in A}\sigma_e\right)
		\left(\prod_{e\in E(J')\setminus A}X_e\right).
		\]
		The empty set gives the original term, and the singletons give the derivatives at $H$. Summing over all copies yields
		\begin{equation}\label{eq:multilinear-switch}
			S(J,H_*)=S(J,H)+\sum_{t\in T}\sigma_tS_t(J,H)+R_{\ge2},
		\end{equation}
		where $R_{\ge2}$ consists of the terms involving at least two toggled edges. The initial event, the exclusion of $\cB$ and~\eqref{eq:toggle-budget} give
		\[
		\begin{aligned}
			|S(J,H)|+\left|\sum_{t\in T}\sigma_tS_t(J,H)\right|
			&\le A_Jp^{m/2}n^{f/2}+|T|L_Jp^{(m-1)/2}n^{(f-r)/2}\\
			&=O_{J,\varepsilon}(p^{m/2}n^{f/2}).
		\end{aligned}
		\]
		
		Each term of $R_{\ge2}$ has absolute value at most one and contains a pair of distinct toggled edges. Assign one such pair to each term by any fixed rule. A fixed copy contributes at most $2^m$ terms, so this costs only a factor depending on $J$. A pair meeting in exactly $a$ vertices occupies $2r-a$ vertices and extends to a copy of $J$ in $O_J(n^{f-2r+a})$ ways when $2r-a\le f$, and in no ways otherwise. For $a\ge1$, the number of pairs with intersection exactly $a$ is at most the number with intersection at least $a$, which is bounded by~\eqref{eq:reservoir-pairs}. Disjoint pairs number at most $|T|^2/2=O_{r,\varepsilon}(pn^r)$. Therefore
		\begin{equation}\label{eq:remainder-bound}
			\begin{aligned}
				|R_{\ge2}|
				&\le C_J\sum_{\substack{0\le a<r\\2r-a\le f}}
				\#\{\{t,s\}\subseteq T:|t\cap s|=a\}\,n^{f-2r+a}\\
				&=O_{J,\varepsilon}(pn^{f-r}).
			\end{aligned}
		\end{equation}
		In particular,
		\begin{equation}\label{eq:unadjusted-statistics}
			|S(J,H_*)|\le C_{J,\varepsilon}
			\left(p^{m/2}n^{f/2}+pn^{f-r}\right)\qquad(f>r).
		\end{equation}
		
		If $f\ge2r$, the absence of isolated vertices gives $m\ge2$, and thus
		$p^{m/2}n^{f/2}\le pn^{f-r}$. If $r<f<2r$ and $J$ is inactive,
		equation~\eqref{eq:inactive-scale} bounds this term by $\sqrt M\,pn^{f-r}$.
		Finally, for an active $J$, the bound on $R(H_*)$ gives
		$|S(J,H_*)|=O_{r,\varepsilon}(d_J)$, and
		\[
		d_J=p^{(m-1)/2}n^{(f-r)/2}
		\le\sqrt p\,n^{(f-r)/2}\le pn^{f-r},
		\]
		using $m\ge2$ and $pn\to\infty$. The one-edge estimate in~\eqref{eq:centered-target} is exactly the nearest-integer edge-count condition in~\eqref{eq:toggle-budget}. The initial and derivative constants were chosen for the whole countable family, with summable failure probabilities. Types with more than $n$ vertices contribute zero. Hence the estimates hold simultaneously for every type.
	\end{proof}

	\begin{proof}[Proof of Theorem~\ref{thm:general-upper}]
		We work first with $p\le1/2$ and fix the constants in the order specified above. For this fixed $M$ and all sufficiently large $n$, Lemma~\ref{lem:candidate-concentration} gives a realization $H$ satisfying the initial, bad-set, candidate and degree bounds prepared above.
		
		Lemmas~\ref{lem:reservoir},~\ref{lem:local-stability},~\ref{lem:paired-descent} and~\ref{lem:global-switching-error} give an $r$-graph $H_*$ satisfying~\eqref{eq:centered-target}. The large-$n$ requirements used to obtain $H_*$ involve only the finite family of types with fewer than $2r$ vertices and the constants just chosen, so the final threshold depends only on $r$ and $\varepsilon$. The constants $A_J,L_J$ for larger $J$ enter only the final constants for the corresponding induced order.
		
		The estimates in~\eqref{eq:centered-target} hold for every $J\in\cF_\infty$, so this single graph $H_*$ can be used for all induced orders. For a fixed $j$, apply~\eqref{eq:centered-expansion} to each $j$-vertex induced type. A centered statistic on $f$ vertices has coefficient $O_{r,j}(n^{j-f})$. Separating the one-edge term in~\eqref{eq:centered-target} gives, simultaneously for $r+1\le j\le n$,
		\[
		u_j^{(r)}(H_*,p)
		\le C_{r,j,\varepsilon}(D_r(n,p)+p)n^{j-r}
		\le3C_{r,j,\varepsilon}\max\{D_r(n,p),p(1-p)\}n^{j-r}.
		\]
		No further lower bound on $n$ is introduced in this last step; only the constants may depend on the chosen order $j$.
		For $p>1/2$, apply the construction at density $1-p$ and complement the resulting $r$-graph. Complementation preserves the nearest-integer edge discrepancy and permutes the induced types at every order. This proves the theorem.
	\end{proof}

	\section*{Acknowledgments}
	The authors thank Shuping Huang for helpful suggestions. They also thank Eureka for helpful comments on the presentation.
	
	\smallskip
	\noindent\textbf{Funding.}
	This work was supported by grants 2023YFA1010202 from the National Key
	R\&D Program of China and 12371342 from the National Natural Science
	Foundation of China.
	
	\section*{Declaration on the use of generative AI}
	During the preparation of this manuscript, the authors used ChatGPT (OpenAI) for language editing and consistency checks in exposition. The authors reviewed all resulting text and are responsible for all mathematical content.


\begingroup
	\small
	\begin{thebibliography}{99}
		\setlength{\itemsep}{1pt}
		
		\bibitem{BS2006}
		B. Bollob\'as and A. D. Scott,
		Discrepancy in graphs and hypergraphs,
		in \emph{More Sets, Graphs and Numbers}, Bolyai Soc. Math. Stud. 15,
		Springer, Berlin, 2006, 33--56.
		\doi{10.1007/978-3-540-32439-3_3}.
		
		\bibitem{BS2011}
		B. Bollob\'as and A. D. Scott,
		Intersections of graphs,
		\emph{J. Graph Theory} 66 (2011), 261--282.
		\doi{10.1002/jgt.20489}.
		
		\bibitem{BS2015}
		B. Bollob\'as and A. D. Scott,
		Intersections of hypergraphs,
		\emph{J. Combin. Theory Ser. B} 110 (2015), 180--208.
		\doi{10.1016/j.jctb.2014.08.002}.
		
		\bibitem{CHZ2025}
		Y. Chen, S. Huang and Q. Zeng,
		Graded discrepancy of graphs and hypergraphs,
		\arxiv{2505.21690} (2025).
		
		\bibitem{CW2026}
		Y. Chen and P. Wang,
		Minimum graded discrepancy of graphs,
		\emph{Ann. Appl. Math.} 42 (2026), no.~1, 94--102.
		\doi{10.4208/aam.oa-2025-0039}.
		
		\bibitem{EGPS1988}
		P. Erd\H{o}s, M. Goldberg, J. Pach and J. Spencer,
		Cutting a graph into two dissimilar halves,
		\emph{J. Graph Theory} 12 (1988), 121--131.
		\doi{10.1002/jgt.3190120113}.
		
		\bibitem{ES1971}
		P. Erd\H{o}s and J. Spencer,
		Imbalances in $k$-colorations,
		\emph{Networks} 1 (1971), 379--385.
		\doi{10.1002/net.3230010407}.
		
		\bibitem{Janson1995}
		S. Janson,
		A graph Fourier transform and proportional graphs,
		\emph{Random Structures Algorithms} 6 (1995), 341--351.
		\doi{10.1002/rsa.3240060221}.
		
		\bibitem{JK1991}
		S. Janson and J. Kratochv\'il,
		Proportional graphs,
		\emph{Random Structures Algorithms} 2 (1991), 209--224.
		\doi{10.1002/rsa.3240020205}.
		
		\bibitem{JS1992}
		S. Janson and J. Spencer,
		Probabilistic construction of proportional graphs,
		\emph{Random Structures Algorithms} 3 (1992), 127--137.
		\doi{10.1002/rsa.3240030203}.
		
		\bibitem{Karrman1993}
		J. K\"arrman,
		Existence of proportional graphs,
		\emph{J. Graph Theory} 17 (1993), 207--220.
		\doi{10.1002/jgt.3190170209}.
		
		\bibitem{LLTY2025}
		D. Luong-Le, T. Tran and D. Yang,
		Relative discrepancy of hypergraphs,
		\arxiv{2506.23264} (2025).
		
		\bibitem{MNS2015}
		J. Ma, H. Naves and B. Sudakov,
		Discrepancy of random graphs and hypergraphs,
		\emph{Random Structures Algorithms} 47 (2015), 147--162.
		\doi{10.1002/rsa.20522}.
		
		\bibitem{NPS2018}
		H. Naves, O. Pikhurko and A. Scott,
		How unproportional must a graph be?,
		\emph{European J. Combin.} 73 (2018), 138--152.
		\doi{10.1016/j.ejc.2018.05.007}.
		
		\bibitem{RST2026}
		E. R\"aty, B. Sudakov and I. Tomon,
		Positive discrepancy, MaxCut, and eigenvalues of graphs,
		\emph{Trans. Amer. Math. Soc.} 379 (2026), no.~3, 2111--2140.
		\doi{10.1090/tran/9551}.
		
		\bibitem{RT2026}
		E. R\"aty and I. Tomon,
		Bisection width, discrepancy, and eigenvalues of hypergraphs,
		\emph{J. Combin. Theory Ser. B} 177 (2026), 186--215.
		\doi{10.1016/j.jctb.2025.11.003}.
		
		\bibitem{SSZ2024}
		A. Sah, M. Sawhney and D. G. Zhu,
		Local limit theorem for joint subgraph counts,
		\emph{Ann. Probab.}, to appear.
		\arxiv{2412.09535}.
		
	\end{thebibliography}
	\endgroup
\end{document}